\pdfoutput=1
\documentclass[11pt, a4paper]{article}

\usepackage{amssymb}
\usepackage{amsmath}
\usepackage{amsthm}
\usepackage{mathtools}
\usepackage{cases,color,fancybox}
\usepackage{wrapfig,ascmac}
\usepackage{graphicx,xcolor}
\usepackage{tikz}
\usepackage{pgfplots}
\usepackage{here}
\usepackage{cleveref}

\crefname{equation}{}{}

\newtheorem{Th}{Theorem}[section] 
\newtheorem{Lem}[Th]{Lemma} 
\newtheorem{Prop}[Th]{Proposition}
 
\newtheorem{definition}[Th]{Definition} 
\newtheorem{problem}[Th]{Problem}

\newcommand{\R}{{\mathbb R}}

\newcommand{\N}{{\mathbb N}}
\newcommand{\Sym}{{\mathcal S}}

\newcommand{\ov}[1]{\overline{#1}}
\newcommand{\Tp}{{\mathrm T}}
\newcommand{\tr}{\operatorname{tr}}
\newcommand{\defeq}{\coloneqq}
\newcommand{\prz}[2]{ \frac{\partial{#1}}{\partial{#2}} }

\newcommand{\dvg}{\operatorname{div}}

\newcommand{\into}{\int_\Omega}

\newcommand{\DS}{\displaystyle}

\newcommand{\dt}{{\Delta t}}
\newcommand{\E}{{\mathcal{E}}}
\newcommand{\Lo}{{L^2(\Omega)}}

\newcommand{\bm}[1]{\textbf{\textit{#1}}}
\newcommand{\Hv}{{\bm{H}}}
\newcommand{\Vv}{{\bm{V}}}
\newcommand{\Hs}{{H}}
\newcommand{\blaket}[2]{ \left\langle {#1},{#2}\right\rangle_{\Vv^*,\Vv} }
\newcommand{\inner}[3]{ \left( {#1},{#2}\right)_{#3} }

\newcommand{\Pop}[1]{\mathcal{P}_{#1}}
\newcommand{\sigmas}[1]{{\sigma_{#1}^*}}

\newcommand{\eps}{\varepsilon}

\providecommand{\keywords}[1]
{
  \small	
  \textbf{Keywords: } #1
}

\providecommand{\MSC}[1]
{
  \small	
  \textbf{2020 Mathematics Subject Classification: } #1
}

\title{
A Projection Method for an Elasto-plasticity Model \\with Linear Kinematic Hardening}

\author{
    Yoshiho Akagawa\thanks{Department of Mathematics, Kyoto University of Education, 1 Fujinomori, Fukakusa, Fushimi-ku, Kyoto 612-8522, Japan (akagawa@kyokyo-u.ac.jp)}
    ~and 
    Kazunori Matsui\thanks{Corresponding author. Department of Logistics and Information Engineering, Tokyo University of Marine Science and Technology, 2-1-6 Etchujima, Koto-ku, Tokyo 135-0044, Japan (kmat002@kaiyodai.ac.jp)}
}
\date{}

\begin{document}
    
\maketitle

\begin{abstract}

    We consider a dynamical elasto-plasticity system with Kelvin--Voigt viscosity and linear kinematic hardening of Melan--Prager type.
    The model is formulated in a variational framework in which a constraint set for the stress evolves in time and is translated by an internal (backstress) variable.
    As a consequence, the flow rule is coupled with an equation of motion through a quasi-variational structure, since the constraint set depends on the unknown internal variable.

    To construct solutions, we employ Rothe's method and introduce a projection-based time discretization.
    Each time step consists of solving a linear viscous-elastic subproblem to obtain a trial stress, followed by a projection onto the translated constraint set.
    We establish stability of the resulting discrete solutions under suitable norms.
    By compactness and passage to the limit as the time step tends to zero, we prove existence of a weak solution in the variational sense, and uniqueness follows from an energy argument.

    The results cover time-dependent yield bounds without assuming spatial continuity or a strictly positive lower bound, and the discretization provides a constructive basis for numerical approximation.
\end{abstract}

\keywords{Melan--Prager model, linear kinematic hardening, projection method, Rothe's method, quasi-variational structure}

\MSC{34A60, 35K61, 35D30, 65M12, 74C05}

\section{Introduction}
\label{sec:intro}

\subsection{Plasticity and linear kinematic hardening}

The mechanical behaviour of many engineering materials such as metals is characterized by both elastic and plastic responses. Elasticity denotes the reversible deformation that disappears after unloading, while plasticity denotes the irreversible deformation that remains. Let $T>0$ be fixed. We consider a body occupying a bounded Lipschitz domain $\Omega\subset\R^d$ ($2\le d\in\N$) we denote the displacement by $u:[0,T]\times\Omega\to\R^d$, the total strain by $\E(u)=\frac{\nabla u + (\nabla u)^\Tp}{2}$, and the stress by $\sigma:[0,T]\times\Omega\to\Sym_d$, where $\Sym_d$ is the space of real symmetric $d\times d$ matrices.

It is common to decompose the total strain into an elastic part and a plastic part,
\[
    \E(u)=\eps_e+\eps_p,
\]
where the elastic strain $\eps_e$ is recoverable under unloading and the plastic strain $\eps_p$ is the residual deformation. For linear isotropic elasticity we write
\[
    \eps_e=S\sigma,
\]
with a homogeneous, symmetric, and isotropic compliance operator $S:\Sym_d \rightarrow \Sym_d$, i.e.,
\begin{align}\label{def:S}
    S\tau=\frac{1+\nu}{E}\tau-\frac{\nu}{E}(\tr\tau)E_d,
    \qquad E>0,\; -1<\nu<\frac{1}{d-1},
\end{align}
where $E_d$ denotes the identity matrix of order $d$. Here, $E$ is Young's modulus and $\nu$ is Poisson's ratio.

Plastic yielding is modeled by imposing that the stress remains in a closed convex set $K$ and evolves on its boundary or in its interior. The classical Prandtl--Reuss flow rule \cite{Pra28,Reu30} can be written in variational form \cite{DL76} as
\[
    \sigma(t)\in K,\qquad
    \left(\prz{\eps_p}{t}(t),\tau-\sigma(t)\right)_\Hs\le0\qquad\mbox{for all }\tau\in K,
\]
or equivalently
\[
    S\prz{\sigma}{t} \in \E\left(\prz{u}{t}\right) - \partial I_K(\sigma),
\]
where $I_K$ is the indicator function of $K$ and $\partial I_K$ is its subdifferential in the Hilbert space $\Hs=L^2(\Omega;\Sym_d)$. Here $(A,B)_\Hs := \int_\Omega A:B\,dx$. $K$ is called the constraint set, and the boundary of $K$ is called the yield surface.

When a material that has undergone plastic deformation is loaded again, further plastic deformation often requires a higher stress level than that required for the initial yielding. This phenomenon is called strain hardening or work hardening.
Two standard mechanisms to account for strain hardening are kinematic hardening (a translation of the constraint set in stress space) and isotropic hardening (an expansion or contraction of its size). In this paper, we focus on a linear kinematic hardening model: we assume
\[
    K(t;\alpha)=\tilde{K}(t)+\alpha(t),\qquad 
    \prz{\alpha}{t}=a\,\prz{\eps_p}{t},\qquad 
    \eps_p=\E(u)-S\sigma,
\]
with a scalar hardening parameter $a>0$ and the set
\[
    \tilde{K}(t)=\{\tau\in\Hs:\;|\tau^D|\le g(t,\cdot)\ \text{a.e. in }\Omega\},
\]
where $\tau^D=\tau-\tfrac{\tr\tau}{d}E_d$ is the deviatoric part, $|\cdot|$ is the Frobenius norm, and $g:[0,T]\times\Omega\to [0,\infty)$ prescribes the yield bound. Here, $\alpha$ is called a backstress. This form of the constraint set $\tilde{K}$ corresponds to the well-known von Mises yield criterion.

This model, often referred to as the Melan--Prager model \cite{Mel38,Pra35,Pra49}, is able to capture the Bauschinger effect. In particular, after a material has undergone sufficient plastic deformation and is then unloaded, subsequent loading in the reverse direction may produce new plastic deformation at stress levels lower than those required during the initial loading; this is the Bauschinger effect. 

\subsection{Related work and contribution}

The variational and evolution inclusion perspective adopted in this paper is closely related to Moreau's sweeping process \cite{Mor71,Mor77}. This framework describes an evolution constrained by a time-dependent closed convex set $K(t)\subset\Hs$ in the form
\[
    \sigma'(t)\in -\partial I_{K(t)}(\sigma(t)),
\]
and it is also well suited to variational inequality formulations in plasticity.
In mechanics, variational inequalities were systematically applied to problems such as plasticity, contact, and friction in \cite{DL76}, mainly in quasistatic settings where the stress constraint is prescribed (given by a closed convex set).

From a broader viewpoint, evolution inclusions governed by subdifferentials provide a common analytic framework for hysteresis and plasticity. 
In particular, \cite{Vis06} organizes several classical hysteresis operators, including Prandtl-type and play/stop operators, and their representations by variational inequalities.
Moreover, \cite{Sho97} presents an abstract theory of evolution problems driven by monotone operators, including subdifferentials of indicator functions, together with applications to PDEs.

Regarding the coupled system with the equation of motion and the perfect plasticity model (the case where $a=0$), well-posedness of the system was proved by using the theory of maximal monotone operators and the Yosida regularization in \cite{KC20}.
In addition, \cite{AFK23} studied a model with a time-dependent constraint set and used abstract results for evolution inclusions with time-dependent constraints.
In the case where the time-dependent constraint set is given as an external datum, a projection-based approach was proposed and analyzed in \cite{AM25}. In \cite{AM25}, no spatial smoothness of the threshold function is assumed, nor strictly positive lower bound of the form $0<c\le g$ is imposed.

In contrast to the case where the constraint set is prescribed, in the linear kinematic hardening model considered in this paper, the admissible set is translated by the backstress $\alpha$. Hence the constraint set $K(t;\alpha)$ depends on the unknown $\alpha$, and the system has a quasi-variational structure. 
For the same type of linear kinematic hardening model, well-posedness was obtained in \cite{AFK26} by an evolution-equation approach.

\medskip
\noindent\textbf{Contribution of this paper.}
In this paper, we prove existence and uniqueness of a weak solution to the linear kinematic hardening system of the same type as in \cite{AFK26} by using a projection-based approach.
Compared with \cite{AFK26}, we weaken the assumptions on the yield bound $g$ and do not assume spatial smoothness or a strictly positive lower bound of the form $0<c\le g$.
Furthermore, the resulting structure is consistent with spatial discretization: by replacing the function spaces with finite element spaces, the same structure gives a direct way to construct numerical schemes.

\subsection{Problem}

We assume the existence of two subsets $\Gamma_1$ and $\Gamma_2$ of the boundary $\Gamma\defeq\partial\Omega$
\[
    \Gamma=\Gamma_1\cup\Gamma_2, \qquad
    \Gamma_1\cap\Gamma_2=\emptyset, \qquad
    \mathcal{H}^{d-1}(\Gamma_1)>0.
\]
Here, $\mathcal{H}^{d-1}(\Gamma_1)$ denotes the ($d-1$)-dimensional Hausdorff measure of $\Gamma_1$.
We couple the equation of motion with a Kelvin--Voigt type viscosity. Denoting $v=\prz{u}{t}$ and $\xi=\prz{\eps_p}{t}$, the unknowns are the velocity $v$, the stress $\sigma$, and the internal variable $\alpha$, and the system reads
\begin{align*}
\left\{\begin{aligned}
    \rho\prz{v}{t} &= \eta\dvg\E(v) + \dvg\sigma + F &&\mbox{in }(0,T)\times\Omega,\\
    S\prz{\sigma}{t} &\in \E(v) - \partial I_K(\sigma) &&\mbox{in }(0,T)\times\Omega,\\
    \prz{\alpha}{t} &= a\xi
    &&\mbox{in }(0,T)\times\Omega,\\
    \xi &= \E(v) - S\prz{\sigma}{t}
    &&\mbox{in }(0,T)\times\Omega,\\
    v &=v_{\rm b} &&\mbox{on }(0,T)\times\Gamma_1,\\
    (\eta\E(v) + \sigma) n &=t_{\rm b} &&\mbox{on }(0,T)\times\Gamma_2,\\
    v(0,\cdot) &= v_0 &&\mbox{in }\Omega,\\
    \sigma(0,\cdot) &= \sigma_0 &&\mbox{in }\Omega,\\
    \alpha(0,\cdot) &= \alpha_0 &&\mbox{in }\Omega,\\
\end{aligned}\right.
\end{align*}
where 
\begin{align*}\begin{array}{c}
    {\DS K(t;\alpha) \defeq \tilde{K}(t) + \alpha(t),\quad 
    \tilde{K}(t) \defeq \left\{\tau\in\Hs : |\tau^D|\le g(t,\cdot)\mbox{ a.e. in }\Omega\right\}.}
\end{array}\end{align*}
Here, 
the density $\rho>0$,
the viscosity coefficient $\eta>0$,
the external force $F:(0,T)\times\Omega\rightarrow\R^d$,
the coefficient $a > 0$,
the elastic compliance tensor $S$ with \eqref{def:S},
boundary values $v_{\rm b}:(0,T)\times\Gamma_1\rightarrow\R^d$ and 
$t_{\rm b}:(0,T)\times\Gamma_2\rightarrow\R^d$, and
initial values $v_0:\Omega\rightarrow \R^d, \sigma_0, \alpha_0:\Omega\rightarrow \Sym_d$ are given with $\sigma_0 \in K(0;\alpha_0)$. Also, $n$ is the outward unit normal vector to the boundary $\Gamma$.

In subsequent sections we set $\rho=1$ for simplicity and introduce notation used throughout the analysis.
We assume that there exists $\tilde{v}_{\rm b}:[0,T]\times\ov{\Omega}\rightarrow\R^d$ such that $\tilde{v}_{\rm b} = v_{\rm b}$ on $(0,T)\times\Gamma_1$. Then, by replacing
    $v$ with $v-\tilde{v}_{\rm b}$,
    $F$ with $F - \rho\prz{\tilde{v}_{\rm b}}{t} + \eta\dvg\E(\tilde{v}_{\rm b})$,
    $t_{\rm b}$ with $t_{\rm b} - \eta\E(\tilde{v}_{\rm b})n$, and
    $v_0$ with $v_0 - \tilde{v}_{\rm b}(0,\cdot)$,
and setting $h=\E(\tilde{v}_{\rm b})$, the problem can be reformulated as follows.

\begin{problem}\label{Prob}
    Find 
        $v:[0,T]\times\Omega\rightarrow\R^d$,
        $\sigma:[0,T]\times\Omega\rightarrow\Sym_d$, and
        $\alpha:[0,T]\times\Omega\rightarrow\Sym_d$ 
    such that
    \begin{align*}
    \left\{\begin{aligned}
        \prz{v}{t} &= \eta\dvg\E(v) + \dvg\sigma + F &&\mbox{in }(0,T)\times\Omega,\\
        S\prz{\sigma}{t} &\in \E(v) + h - \partial I_K(\sigma) &&\mbox{in }(0,T)\times\Omega,\\
        \prz{\alpha}{t} &= a\xi
        &&\mbox{in }(0,T)\times\Omega,\\
        \xi &= \E(v)+h - S\prz{\sigma}{t}
        &&\mbox{in }(0,T)\times\Omega,\\
        v &=0 &&\mbox{on }(0,T)\times\Gamma_1,\\
        (\eta\E(v) + \sigma) n &=t_{\rm b} &&\mbox{on }(0,T)\times\Gamma_2,\\
        v(0,\cdot) &= v_0 &&\mbox{in }\Omega,\\
        \sigma(0,\cdot) &= \sigma_0 &&\mbox{in }\Omega,\\
        \alpha(0,\cdot) &= \alpha_0 &&\mbox{in }\Omega,\\
    \end{aligned}\right.
    \end{align*}
    where 
    \begin{align*}\begin{array}{c}
        {\DS K(t;\alpha) \defeq \tilde{K}(t) + \alpha(t),\quad 
        \tilde{K}(t) \defeq \left\{\tau\in\Hs : |\tau^D|\le g(t)\mbox{ a.e. in }\Omega\right\}.}
    \end{array}\end{align*}
\end{problem}

\subsection{Notations}

For a Banach space $X$, the dual pairing between $X$ and the dual space $X^*$ is denoted by $\langle\cdot,\cdot\rangle_{\!X^*,X}$.
We say that a function $u:[0, T]\rightarrow X$ is weakly continuous if, for all $f \in X^*$, the function defined by $[0,T]\ni t\mapsto\langle f,u(t)\rangle_{\!X^*,X}\in\R$ is continuous. We denote by $C^0([0,T];X_w)$ the set of functions defined on $[0,T]$ with values in $X$ which are weakly continuous.
Let $\dt=T/N$ ($N\in\N$). For two sequences $(x_k)_{k=0}^N$ and $(y_k)_{k=1}^N$ in $X$, we define a piecewise linear interpolant $\hat{x}_\dt \in W^{1, \infty}(0, T; X)$ of $(x_k)_{k=0}^N$ and a piecewise constant interpolant $\bar{y}_\dt \in L^\infty(0, T; X)$ of $(y_k)_{k=1}^N$, respectively, by
\[\begin{aligned}
    \hat{x}_\dt(t)&:=x_{k-1}+\frac{t-t_{k-1}}{\dt}(x_k-x_{k-1})
    &&\text{for }t\in[t_{k-1},t_k] \mbox{ and }k=1,2,\ldots,N,\\
    \bar{y}_\dt(t)&:=y_k
    &&\text{for }t\in(t_{k-1},t_k] \mbox{ and }k=1,2,\ldots,N,
\end{aligned}\]
where $t_k=k\dt$.
We define a backward difference operator by 
\[
    D_\dt x_k:=\frac{x_k-x_{k-1}}{\dt},\qquad
    D_\dt y_l:=\frac{y_l-y_{l-1}}{\dt}
\]
for $k=1,2,\ldots,N$ and $l=2,3,\ldots,N$.
Then, $\frac{\partial\hat{x}_\dt}{\partial t}=D_\dt x_k$
on $(t_{k-1},t_k)$ for all $k=1,2,\ldots,N$.

We define the function spaces
$\Hv \defeq L^2(\Omega;\R^d)$,
$\Hs \defeq L^2(\Omega;\Sym_d)$,
$\Vv \defeq \{\varphi\in H^1(\Omega;\R^d):\varphi=0\mbox{ on }\Gamma_1\}$,
and let $\Vv^*$ be the dual space of $\Vv$. 
We also define the elasticity tensor $C:\Sym_d\to\Sym_d$ by
\[
    C\eps=\frac{E}{1+\nu}\left(\eps+\frac{\nu}{1-(d-1)\nu}(\tr\eps)E_d\right),
\]
so that $C$ is the inverse of $S$, i.e., $CS\tau=SC\tau=\tau$ for all $\tau\in\Sym_d$. 
Moreover, we introduce the norms $\|\cdot\|_S:\Hs\rightarrow\R$ and $\|\cdot\|_C:\Hs\rightarrow\R$ as follows:
\[
    \|\tau\|_S^2 \defeq \into (S\tau):\tau dx,\qquad 
    \|\tau\|_C^2 \defeq \into (C\tau):\tau dx\qquad\text{for all }\tau\in\Hs,
\]
where $A:B=\sum_{i,j=1}^{d} A_{ij}B_{ij}$. These are norms on $\Hs$ by Lemma \ref{Lem:posSC}.

\subsection{Definition of a solution to Problem \ref{Prob}}

In the weak formulation of Problem \ref{Prob}, we collect the external force and boundary traction into a single functional $f\in L^2(0,T;\Vv^*)$ defined by
\begin{align*}
    \blaket{f(t)}{\varphi}
    \defeq \into F(t)\cdot\varphi dx + \int_{\Gamma_2} t_{\rm b}(t)\cdot\varphi ds
\end{align*}
for all $\varphi\in\Vv$ and a.e. $t\in(0,T)$.
The solution to Problem \ref{Prob} is defined as follows.

\begin{definition}
    Given $\eta>0$, $a > 0$, $v_0\in\Hv$, $\sigma_0\in\Hs$, $\alpha_0\in\Hs$, $f\in L^2(0,T;\Vv^*)$, $S:\Sym_d\rightarrow\Sym_d$, $h\in L^2(0,T;\Hs)$, $g\in H^1(0,T;\Lo)$. Assume that $S$ satisfies \eqref{def:S} and that $g(t,x)\ge0$  for a.e. $(t,x)\in(0,T)\times\Omega$.
    We call the triplet $(v, \sigma, \alpha)\in (H^1(0,T;\Vv^*)\cap L^2(0,T;\Vv)) \times$ $H^1(0,T;\Hs) \times H^1(0,T;\Hs)$ a solution to Problem \ref{Prob} if:
    $v(0)=v_0$, $\sigma(0)=\sigma_0$, $\alpha(0)=\alpha_0$ and for all $t\in[0,T]$,
    \[
        \sigma(t) \in K(t;\alpha) \defeq \tilde{K}(t) + \alpha(t)
    \]
    is satisfied, and for a.e. $t\in(0,T)$,
    \begin{align}\label{sys:Prob}\left\{\begin{aligned}
        &\blaket{v'}{\varphi}
        + \eta (\E(v),\E(\varphi))_\Hs + (\sigma,\E(\varphi))_\Hs
        = \blaket{f}{\varphi} 
        &&\mbox{for all }\varphi\in\Vv,\\
        &\left(S\sigma'-\E(v),\sigma-\tau\right)_\Hs 
        \le \left(h,\sigma-\tau\right)_\Hs
        &&\mbox{for all }\tau\in K(t;\alpha),\\
        &\alpha' = a\xi
        &&\mbox{in }\Hs
    \end{aligned}\right.
    \end{align}
    holds. Here, $\xi = \E(v) + h - S\sigma'$.
\end{definition}

\section{Proposed scheme}

Let 
\[
    f_n \defeq \frac{1}{\dt}\int_{t_{n-1}}^{t_n} f(t)dt, \qquad
    h_n \defeq \frac{1}{\dt}\int_{t_{n-1}}^{t_n} h(t)dt, \qquad
    g_n \defeq g(t_n), 
\]
for all $n=1,2,\ldots,N$.

We consider the following numerical scheme: 
\begin{itembox}[l]{Proposed scheme (P)} 
For each $n=1,2,\ldots,N$, find $v_n\in\Vv$, $\sigmas{n},\sigma_n,\alpha_n\in\Hs$ satisfying
\begin{align*}\left\{\begin{aligned}
    &\inner{\frac{v_n-v_{n-1}}{\dt}}{\varphi}{\Hv}
    + \eta (\E(v_n),\E(\varphi))_\Hs + (\sigmas{n},\E(\varphi))_\Hs
    = \blaket{f_n}{\varphi} && \mbox{for all }\varphi\in\Vv,\\
    &S\frac{\sigmas{n}-\sigma_{n-1}}{\dt} - \E(v_n) = h_n
    &&\mbox{in }\Hs,\\
    & \frac{\alpha_n - \alpha_{n-1}}{\dt}
    = a\xi_n &&\mbox{in }\Hs,\\
    &\sigma_n - \alpha_n
    = \Pop{g_n}\left(\sigmas{n} - \alpha_n \right)
    &&\mbox{in }\Hs.\\
\end{aligned}\right.\end{align*}
where 
\[
    \xi_n \defeq \E(v_n) + h_n - S\frac{\sigma_n - \sigma_{n-1}}{\dt},
\]
and $\Pop{R}:\Sym_d\rightarrow\Sym_d$ is defined for $A\in\Sym_d$, 
\begin{align*}
    \Pop{R}(A) &\defeq \left\{\begin{aligned}
        &A
        &&\mbox{if }|A^D|\le R,\\
        &\frac{\tr A}{d}E_d + R\frac{A^D}{|A^D|}
        &&\mbox{if }|A^D|>R.
    \end{aligned}\right.
\end{align*}
\end{itembox}

Scheme (P) is a projection-based time discretization in which each step is split into a linear viscous-elastic subproblem and a projection onto the translated constraint set. More precisely, for a fixed $n\in\{1,\dots,N\}$ we proceed as follows.

\smallskip
\noindent\emph{(i) Trial step.}
The first and second equations form a linear system for $(v_n,\sigmas{n})$.
The quantity $\sigmas{n}$ plays the role of a trial stress obtained by ignoring the constraint $\sigma\in K(t;\alpha)$ at this stage.

\smallskip
\noindent\emph{(ii) Backstress update.}
The backstress $\alpha_n$ is updated by the third equation using $\xi_n$ as an approximation of the plastic strain rate $\prz{\eps_p}{t}(t_n)$.

\smallskip
\noindent\emph{(iii) Projection step.}
The fourth equation enforces the stress constraint at $t_n$ by projecting the shifted trial stress $\sigmas{n}-\alpha_n$ onto the set $\tilde K(t_n)$ (here realized through the pointwise projection operator $\Pop{g_n}$).

The correction $\sigmas{n}-\sigma_n$ generated by the projection provides the discrete plastic flow. Indeed, combining the second equation with the definition of $\xi_n$ yields
\begin{align}\label{eq:xiequiv}
    \xi_n = S\frac{\sigmas{n} - \sigma_n}{\dt}.
\end{align}
Note that $\xi_n$ depends on $\sigma_n$, while $\sigma_n$ is defined through a projection involving $\alpha_n$, and $\alpha_n$ itself is updated by $\xi_n$. Hence, at each step the variables $(\sigma_n,\alpha_n)$ are coupled through a nonlinear relation, and solvability of (P) is not immediate. We address this point in the next section.

\section{Solvability of the Proposed Scheme}

We verify that the proposed scheme (P) can be solved at each step. From the second equation of (P) and the definition of $C$, we have 
\begin{align}\label{eq:equiv2nd}
    \sigmas{n}=\sigma_{n-1}+\dt C(\E(v_n)+h_n).
\end{align} 
From the first equation of (P), it holds that
\begin{align*}
    &\inner{\frac{v_n-v_{n-1}}{\dt}}{\varphi}{\Hv}
	+ \left((\eta E_d+\dt C)\E(v_n),\E(\varphi)\right)_\Hs\\
	=~& \blaket{f_n}{\varphi} 
	- \left(\sigma_{n-1}+\dt Ch_n,\E(\varphi)\right)_\Hs.
\end{align*}
If $v_{n-1}\in\Vv$, $\sigma_{n-1}\in\Hs$ are known, we uniquely determine $v_n\in\Vv$ by the Korn inequality (Lemma \ref{Lem:Korn}), the positive definiteness of $C$ (Lemma \ref{Lem:posSC}), and the Lax--Milgram theorem. We then obtain $\sigmas{n}$ from \eqref{eq:equiv2nd}.

Next, we calculate $\alpha_n$ and $\sigma_n$ from the third and fourth equations of (P):
\begin{align}\label{sys:3rd4th}\left\{\begin{aligned}
    & \alpha_n - \alpha_{n-1}
    = aS(\sigmas{n} - \sigma_n) &&\mbox{in }\Hs,\\
    &\sigma_n - \alpha_n
    = \Pop{g_n}\left(\sigmas{n} - \alpha_n \right)
    &&\mbox{in }\Hs,\\
\end{aligned}\right.\end{align}
where we used \eqref{eq:xiequiv}.
From the definition of $\Pop{g_n}$ and \eqref{sys:3rd4th}, we know that 
\begin{align}\label{sys:tr}
    \tr\sigma_n=\tr\sigmas{n},\qquad
    \tr\alpha_n=\tr\alpha_{n-1}.
\end{align}
Since $(S\tau)^D=\frac{1+\nu}{E}\tau^D$ and $(\Pop{g_n}(\tau))^D = \Pop{g_n}(\tau^D)$, the deviatoric parts of \eqref{sys:3rd4th} are written as
\begin{align}\label{sys:alphasigma}\left\{\begin{aligned}
    & \alpha^D_n - \alpha^D_{n-1}
    = b((\sigmas{n})^D - \sigma^D_n) &&\mbox{in }\Hs,\\
    &\sigma^D_n - \alpha^D_n
    = \Pop{g_n}\left((\sigmas{n})^D - \alpha^D_n \right)
    &&\mbox{in }\Hs,\\
\end{aligned}\right.\end{align}
where $b = a\frac{1+\nu}{E}$.
The system \eqref{sys:alphasigma} determines $\alpha_n^D$ and $\sigma_n^D$ from $(\sigmas{n})^D$ and $\alpha_{n-1}^D$.
Moreover, it can be solved explicitly, pointwise (a.e.) in $\Omega$, as we show below.
To simplify the notation, we omit the space variable $(x)$ and write
$\sigmas{n}$, $\alpha_{n-1}$, $g_n$, $\sigma_n$, and $\alpha_n$
instead of $\sigmas{n}(x)$, $\alpha_{n-1}(x)$, $g_n(x)$, $\sigma_n(x)$, and $\alpha_n(x)$, respectively.

\smallskip\noindent
(i) If $|(\sigmas{n}-\alpha_{n-1})^D| \le g_n$, then
\[
    \alpha^D_n = \alpha^D_{n-1},\qquad
    \sigma^D_n = (\sigmas{n})^D.
\]
This corresponds to the case where no plastic deformation occurs.

\smallskip\noindent
(ii) Let us consider the case when $|(\sigmas{n}-\alpha_{n-1})^D| > g_n$. From the second equation of \eqref{sys:alphasigma}, $(\sigmas{n})^D$, $\sigma^D_n$, and $\alpha^D_n$ are collinear. From the first equation of \eqref{sys:alphasigma}, $\alpha^D_{n-1}$ also lies on the line passing through $(\sigmas{n})^D$, $\sigma^D_n$, and $\alpha^D_n$ (see Figure \ref{fig}).

\begin{figure}[htp]\begin{center}
\begin{tikzpicture}[>=stealth]

  \draw (-3,0) -- (5,0);

  \fill (4,0) circle (2pt) node[above] {$(\sigma_n^*)^D$};
  \fill (3,0) circle (2pt) node[above] {$\sigma^D_n$};
  \fill (0,0) circle (2pt) node[above] {$\alpha^D_n$};
  \fill (-2,0) circle (2pt) node[above] {$\alpha^D_{n-1}$};

\end{tikzpicture}
\caption{The geometric configuration of $\alpha_{n-1}^D$, $\alpha_n^D$, $\sigma_n^D$, and $(\sigma_n^*)^D$ in $\Sym_d$. Points $\alpha_{n-1}^D, \alpha_n^D, \sigma_n^D, (\sigma_n^*)^D$ lie on a line in this order. The distances satisfy
$|\sigma_n^D - \alpha_n^D| = g_n$ and $|\alpha_n^D - \alpha_{n-1}^D| = b|(\sigma_n^*)^D - \sigma_n^D|$.}\label{fig}
\end{center}\end{figure}
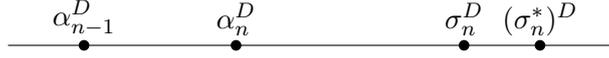
Hence, we can find $\alpha^D_n$ and $\sigma^D_n$:
\begin{align*}
    \alpha^D_n - \alpha^D_{n-1} 
    = \frac{b}{b+1}\left((\sigmas{n})^D - \alpha^D_{n-1} 
    - g_n\frac{(\sigmas{n})^D - \alpha^D_{n-1}}{|(\sigmas{n})^D - \alpha^D_{n-1}|}\right),\quad
    \sigma^D_n - (\sigmas{n})^D
    = - \frac{1}{b}(\alpha^D_n - \alpha^D_{n-1}).
\end{align*}

Therefore, we can summarize \eqref{sys:tr} and the cases (i) and (ii) as follows: $\alpha_n$ and $\sigma_n$ can be expressed as
\begin{align}\label{sol:alphasigma}
    \alpha_n - \alpha_{n-1} = \frac{b}{b+1}\left(\sigmas{n} - \alpha_{n-1} - \Pop{g_n}(\sigmas{n} - \alpha_{n-1})\right),
    \quad
    \sigma_n - \sigmas{n}
    = - \frac{1}{b}(\alpha_n - \alpha_{n-1}).
\end{align}
If $\sigmas{n}$ and $\alpha_{n-1}$ are known, then $\alpha_n$ and $\sigma_n$ are determined by \eqref{sol:alphasigma}.

\section{Main results}

The discrete solutions produced by the proposed scheme are stable in the following sense.

\begin{Th}\label{Th:stab}~
    \begin{enumerate}
        \item[(i)] There exists a constant $c_1 > 0$ independent of $\dt$ such that for all $0<\dt\le1$,
        \[\begin{aligned}
            &\|\hat{v}'_\dt\|_{L^2(0,T;\Vv^*)}^2
            +\|\bar{v}_\dt\|_{L^\infty(0,T;\Hv)}^2 
            + \|\bar{v}_\dt\|_{L^2(0,T;\Vv)}^2
            + \frac{1}{\dt}\|\hat{v}_\dt - \bar{v}_\dt\|_{L^2(0,T;\Hv)}^2\\
            &+ \|\bar{\sigma}^*_\dt\|_{L^\infty(0,T;\Hs)}^2 
            + \|\bar{\sigma}_\dt\|_{L^\infty(0,T;\Hs)}^2
            + \frac{1}{\dt}\|\hat{\sigma}_\dt - \bar{\sigma}_\dt\|_{L^2(0,T;\Hs)}^2\\
            &+ \frac{1}{\dt}\|\bar{\sigma}^*_\dt - \bar{\sigma}_\dt\|_{L^2(0,T;\Hs)}^2
            + \|\bar{\alpha}_\dt\|_{L^\infty(0,T;\Hs)}^2
            + \frac{1}{\dt}\|\hat{\alpha}_\dt - \bar{\alpha}_\dt\|_{L^2(0,T;\Hs)}^2\\
            \le~& c_1(\|v_0\|_\Hv^2 + \|\sigma_0\|_\Hs^2 + \|\alpha_0\|_\Hs^2
            + \|f\|_{L^2(0,T;\Vv^*)}^2 + \|h\|_{L^2(0,T;\Hs)}^2).
        \end{aligned}\]
        In particular, it holds that
        $\|\bar{\sigma}^*_\dt - \bar{\sigma}_\dt\|_{L^2(0,T;\Hs)}\rightarrow0$
        as $\dt\rightarrow0$.
        \item[(ii)] There exists a constant $c_2 > 0$ independent of $\dt$ such that for all $0<\dt\le1$,
        \[\begin{aligned}
            & \|\hat{\sigma}_\dt\|_{H^1(0,T;\Hs)}^2 
            + \|\hat{\alpha}_\dt\|_{H^1(0,T;\Hs)}^2 
            + \|\bar{\xi}_\dt\|_{L^2(0,T;\Hs)}^2\\
            \le~& c_2(\|v_0\|_\Hv^2 + \|\sigma_0\|_\Hs^2 + \|\alpha_0\|_\Hs^2 
            + \|f\|_{L^2(0,T;\Vv^*)}^2 + \|h\|_{L^2(0,T;\Hs)}^2 + \|g\|_{H^1(0,T;\Lo)}^2).
        \end{aligned}\]
    \end{enumerate}
\end{Th}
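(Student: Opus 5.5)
We would prove part (i) by a discrete energy estimate that exploits the cancellation between the momentum equation and the trial-stress equation. Part (ii) would then follow from a pointwise estimate of $\xi_n$ obtained from the projection inequality, tested with a suitably rescaled previous state.

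\emph{Part (i).} Test the first equation of (P) with $\varphi=\dt\, v_n$. Take the $\Hs$-inner product of the second equation, in the form $S(\sigmas{n}-\sigma_{n-1})=\dt(\E(v_n)+h_n)$, with $\sigmas{n}$, and add the two. The terms $\dt(\sigmas{n},\E(v_n))_\Hs$ cancel. We are left with
\[
\tfrac12\|v_n\|_\Hv^2-\tfrac12\|v_{n-1}\|_\Hv^2+\tfrac12\|v_n-v_{n-1}\|_\Hv^2+\eta\dt\|\E(v_n)\|_\Hs^2
+\tfrac12\|\sigmas{n}\|_S^2-\tfrac12\|\sigma_{n-1}\|_S^2+\tfrac12\|\sigmas{n}-\sigma_{n-1}\|_S^2,
\]
bounded by $\dt\langle f_n,v_n\rangle+\dt(h_n,\sigmas{n})_\Hs$.

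Next we replace $\|\sigmas{n}\|_S^2$ by $\|\sigma_n\|_S^2$ plus a backstress term. Expanding,
\[
\|\sigmas{n}\|_S^2=\|\sigma_n\|_S^2+\|\sigmas{n}-\sigma_n\|_S^2+2(S(\sigmas{n}-\sigma_n),\sigma_n)_\Hs .
\]
Since $\sigma_n-\alpha_n$ is the pointwise Frobenius projection of $\sigmas{n}-\alpha_n$ onto $\tilde K(t_n)$, and $0\in\tilde K(t_n)$ because $g\ge0$, we get $(\sigmas{n}-\sigma_n,\sigma_n-\alpha_n)_\Hs\ge0$. By \eqref{sys:tr}, $\sigmas{n}-\sigma_n$ is deviatoric, so $S(\sigmas{n}-\sigma_n)=\frac{1+\nu}{E}(\sigmas{n}-\sigma_n)$ and the same sign holds with $S$ inserted. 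Hence
\[
(S(\sigmas{n}-\sigma_n),\sigma_n)_\Hs\ge(S(\sigmas{n}-\sigma_n),\alpha_n)_\Hs=\tfrac1a(\alpha_n-\alpha_{n-1},\alpha_n)_\Hs,
\]
using $\alpha_n-\alpha_{n-1}=aS(\sigmas{n}-\sigma_n)$. The right-hand side telescopes as $\frac1{2a}\bigl(\|\alpha_n\|_\Hs^2-\|\alpha_{n-1}\|_\Hs^2+\|\alpha_n-\alpha_{n-1}\|_\Hs^2\bigr)$.

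We then sum over $n$. The $f$-term is absorbed by Young's inequality, Korn's inequality (Lemma \ref{Lem:Korn}) and the $\eta$-term. The $h$-term is handled by $\dt(h_n,\sigmas{n})\le\frac{\dt}{2}\|h_n\|^2+\frac{\dt}{2}\|\sigmas{n}\|^2$ together with a discrete Gronwall argument, which is where $\dt\le1$ is used. The norm equivalences of Lemma \ref{Lem:posSC} convert between $\|\cdot\|_S$ and $\|\cdot\|_\Hs$. This yields all the $L^\infty$ bounds, the bound on $\sum\|\E(v_n)\|^2\dt$, and the increment sums $\sum\|v_n-v_{n-1}\|^2$, $\sum\|\sigmas{n}-\sigma_{n-1}\|^2$, $\sum\|\sigmas{n}-\sigma_n\|^2$ and $\sum\|\alpha_n-\alpha_{n-1}\|^2$. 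These sums equal exactly the $\frac1\dt\|\cdot\|_{L^2}^2$ quantities in the statement. For $\sigma_n-\sigma_{n-1}$ we use the triangle inequality. The bound on $\hat v'_\dt$ in $L^2(0,T;\Vv^*)$ is read off from the first equation by duality. The convergence $\|\bar\sigma^*_\dt-\bar\sigma_\dt\|_{L^2}\to0$ is immediate, since this norm squared is $O(\dt)$.

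\emph{Part (ii).} Set $\zeta_n\defeq\sigma_n-\alpha_n\in\tilde K(t_n)$; note $\zeta_0\in\tilde K(0)$ because $\sigma_0\in K(0;\alpha_0)$. The projection characterization gives $(\xi_n,\zeta_n-\tau)_\Hs\ge0$ for all $\tau\in\tilde K(t_n)$, since $\xi_n$ is a positive multiple of $\sigmas{n}-\sigma_n$.

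For the test function we choose $\tau\defeq\Pop{g_n}(\zeta_{n-1})$. It lies in $\tilde K(t_n)$ and satisfies pointwise $|\tau-\zeta_{n-1}|\le|g_n-g_{n-1}|$, because $|\zeta_{n-1}^D|\le g_{n-1}$. Hence
\[
(\xi_n,\zeta_n-\zeta_{n-1})_\Hs\ge-(|\xi_n|,|g_n-g_{n-1}|)_{\Lo}.
\]
On the other hand, $\zeta_n-\zeta_{n-1}=(\sigma_n-\sigma_{n-1})-(\alpha_n-\alpha_{n-1})=\dt\bigl[C(\E(v_n)+h_n-\xi_n)-a\xi_n\bigr]$. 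Therefore
\[
\|\xi_n\|_C^2+a\|\xi_n\|_\Hs^2\le(\xi_n,C(\E(v_n)+h_n))_\Hs+\|\xi_n\|_\Hs\,\|D_\dt g_n\|_{\Lo}.
\]
By Young's inequality this gives $\|\xi_n\|_\Hs^2\le c(\|\E(v_n)\|^2+\|h_n\|^2+\|D_\dt g_n\|^2)$. We multiply by $\dt$ and sum. Part (i) controls $\sum\dt\|\E(v_n)\|^2$, and $\sum\dt\|D_\dt g_n\|^2\le\|g'\|_{L^2(0,T;\Lo)}^2$ holds by Jensen's inequality. This bounds $\bar\xi_\dt$ in $L^2(0,T;\Hs)$. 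The bounds $D_\dt\alpha_n=a\xi_n$ and $D_\dt\sigma_n=C(\E(v_n)+h_n-\xi_n)$ then give the $H^1$ bounds; the $L^2$ parts of $\hat\sigma_\dt$ and $\hat\alpha_\dt$ are covered by part (i).

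The main obstacle is the choice of test element in part (ii): it must absorb the time-dependence of $\tilde K$ using only $g\in H^1(0,T;\Lo)$, with no positive lower bound on $g$. The rescaling $\Pop{g_n}(\zeta_{n-1})$ handles this because the projection is $1$-Lipschitz in the radius, including when $g_n=0$.
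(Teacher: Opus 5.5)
Your proposal is correct and is essentially the paper's proof: in (i) you test with $v_n$ and $\sigma^*_n$ so the coupling terms cancel. You then use the projection inequality with $\tau=0$ (the paper's Lemma~\ref{Lem:Sineq}) to trade $\|\sigma^*_n\|_S^2$ for $\|\sigma_n\|_S^2$ plus the telescoping backstress energy. In (ii) you use the same test element $\Pop{g_n}(\sigma_{n-1}-\alpha_{n-1})$, together with the $1$-Lipschitz dependence of the projection on the radius.

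The differences are cosmetic. The paper applies the $\tau=0$ inequality twice, once shifted to $n-1$, so that $\|\sigma^*_n\|_S^2$ can be Gronwalled directly with coefficient $\frac12$. Your single combination instead needs a weighted Young inequality on $\dt(h_n,\sigma^*_n)_\Hs$, so that after $\|\sigma^*_n\|_S^2\le 2\|\sigma_n\|_S^2+2\|\sigma^*_n-\sigma_n\|_S^2$ the Gronwall condition $\alpha_k\dt<1$ holds for all $\dt\le1$, not merely for $\dt$ small depending on $c_S$. In (ii) you substitute $D_\dt\sigma_n=C(\E(v_n)+h_n-\xi_n)$ to get coercivity in $\xi_n$ alone, while the paper expands $\|\E(v_n)+h_n\|_C^2$ to bound $\xi_n$ and $D_\dt\sigma_n$ at once.
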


From the boundedness obtained in Theorem \ref{Th:stab}, we obtain that the sequences $(\hat{v}_\dt)_{0<\dt<1}$, $(\bar{v}_\dt)_{0<\dt<1}$, $(\bar{\sigma}^*_\dt)_{0<\dt<1}$, $(\bar{\sigma}_\dt)_{0<\dt<1}$, $(\hat{\sigma}_\dt)_{0<\dt<1}$, $(\bar{\alpha}_\dt)_{0<\dt<1}$, and $(\hat{\alpha}_\dt)_{0<\dt<1}$ have subsequences that converge weakly. Since the resulting limit $(v,\sigma,\alpha)$ is a solution to Problem \ref{Prob}, the following theorem can be established.

\begin{Th}\label{Th:exist}
    There exists a unique solution $(v,\sigma,\alpha)$ to Problem \ref{Prob}.
\end{Th}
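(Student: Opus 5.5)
The plan is to prove existence by passing to the limit $\dt\to0$ in scheme (P), using the bounds of Theorem \ref{Th:stab}, and to prove uniqueness by an energy estimate.

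\textbf{Compactness.} By Theorem \ref{Th:stab} we extract a subsequence along which
$\hat v_\dt\rightharpoonup v$ weakly in $H^1(0,T;\Vv^*)\cap L^2(0,T;\Vv)$, $\bar v_\dt\rightharpoonup v$ weakly in $L^2(0,T;\Vv)$, $\hat\sigma_\dt\rightharpoonup\sigma$ and $\hat\alpha_\dt\rightharpoonup\alpha$ weakly in $H^1(0,T;\Hs)$, and $\bar\xi_\dt\rightharpoonup\xi$ weakly in $L^2(0,T;\Hs)$.
The terms $\frac1\dt\|\hat x_\dt-\bar x_\dt\|^2$ are bounded, and $\|\bar\sigma^*_\dt-\bar\sigma_\dt\|_{L^2(0,T;\Hs)}\to0$. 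Hence $\bar\sigma_\dt,\bar\sigma^*_\dt\rightharpoonup\sigma$, $\bar\alpha_\dt\rightharpoonup\alpha$ and $\bar v_\dt$, $\hat v_\dt$ share the limit $v$. Because $H^1(0,T;\Hs)\hookrightarrow C^0([0,T];\Hs_w)$ and the interpolants start at the data, the initial conditions pass to the limit.

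\textbf{Linear equations.} The first equation of (P) reads $\langle\hat v'_\dt,\varphi\rangle+\eta(\E(\bar v_\dt),\E(\varphi))+(\bar\sigma^*_\dt,\E(\varphi))=\langle\bar f_\dt,\varphi\rangle$. With $\bar f_\dt\to f$ and $\bar h_\dt\to h$ strongly in $L^2$, testing against $\psi(t)\varphi$ gives the first equation of \eqref{sys:Prob}.
The relation $\bar\xi_\dt=\E(\bar v_\dt)+\bar h_\dt-S\hat\sigma'_\dt$ and the identity $\hat\alpha'_\dt=a\bar\xi_\dt$ are linear. They pass to the limit to give $\xi=\E(v)+h-S\sigma'$ and $\alpha'=a\xi$.

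\textbf{Constraint.} Since $\bar\sigma_\dt(t)-\bar\alpha_\dt(t)\in\tilde K(t_n)$ for $t\in(t_{n-1},t_n]$, we compare with $g$ at time $t$. We use $\|g(t_n)-g(t)\|_{\Lo}\le\dt^{1/2}\|g'\|_{L^2(t_{n-1},t_n;\Lo)}$. The set $\{(\tau,\gamma):|\tau^D|\le\gamma\}$ is convex and closed, hence weakly closed in $L^2(0,T;\Hs)\times L^2(0,T;\Lo)$. This gives $\sigma-\alpha\in\tilde K(t)$ for a.e.\ $t$, and for all $t$ by continuity.

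\textbf{Flow rule (main obstacle).} The projection characterization gives, pointwise, $(\sigma^*_n-\sigma_n):(\sigma_n-\alpha_n-\tilde\tau)\ge0$ for every $\tilde\tau$ with $|\tilde\tau^D|\le g_n$ (the trace parts are equal). By \eqref{eq:xiequiv}, $\sigma^*_n-\sigma_n=\dt\,C\xi_n$, so
\[
(\bar\xi_\dt,\bar\sigma_\dt-\bar\alpha_\dt-\tilde\tau)_{C}\ge0,
\]
where $(\cdot,\cdot)_C$ is the inner product inducing $\|\cdot\|_C$.
For a test $\tau\in K(t;\alpha)$, write $\tau=\alpha(t)+\tilde\tau$. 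Note that $(S\sigma'-\E(v)-h,\sigma-\tau)=-(\xi,\sigma-\tau)$, so the needed inequality is $\int_0^T(\xi,\sigma-\alpha-\tilde\tau)_\Hs\,dt\ge0$ for suitable test functions $\tilde\tau$. (The $C$- versus $\Hs$-pairing issue is handled by working in deviatoric parts, where $S$ and $C$ are scalar multiples.) Localizing in time by density then yields the pointwise statement.

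The difficulty is the product $(\bar\xi_\dt,\bar\sigma_\dt-\bar\alpha_\dt)$ of two weakly convergent sequences. We want $\limsup$ of this product to be at most its limit value, and we obtain this through an energy identity. Testing the first equation of (P) with $v_n$, using $C$ applied to the second, and using $(\alpha_n-\alpha_{n-1})/(a\dt)=\xi_n$ gives a discrete energy inequality. It bounds
\[
\tfrac12|v|^2+\tfrac12\|\sigma\|_S^2+\tfrac1{2a}\|\alpha\|_\Hs^2+\eta\int\|\E(v)\|^2+\int(\xi,\sigma-\alpha)
\]
from above by the data terms. Weak lower semicontinuity then gives $\limsup\int(\bar\xi_\dt,\bar\sigma_\dt-\bar\alpha_\dt)\le\int(\xi,\sigma-\alpha)$, once compared with the exact energy identity satisfied by the limit; that identity follows from the linear equations already established. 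The mismatch between $g_n$ and $g(t)$ in the test sets is handled by taking $\tilde\tau_\dt$ as a pointwise rescaling of $\tilde\tau$ by $\min(1,g_n/g)$, with the error controlled by the $H^1(0,T;\Lo)$ regularity of $g$. This avoids any lower bound on $g$.

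\textbf{Uniqueness.} For two solutions, subtract the equations and test the first with $v_1-v_2$. In the variational inequalities, use $\tau=\sigma_2-\alpha_2+\alpha_1\in K(t;\alpha_1)$ and the symmetric choice $\tau=\sigma_1-\alpha_1+\alpha_2\in K(t;\alpha_2)$, then add. The cross terms cancel and yield
\[
\frac{d}{dt}\Big(\tfrac12\|v_1-v_2\|_\Hv^2+\tfrac12\|\sigma_1-\sigma_2\|_S^2+\tfrac1{2a}\|\alpha_1-\alpha_2\|_\Hs^2\Big)+\eta\|\E(v_1-v_2)\|_\Hs^2\le0,
\]
so the two solutions coincide.
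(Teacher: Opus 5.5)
Your proposal is correct. Its architecture matches the paper's: compactness from Theorem \ref{Th:stab}, limits in the linear equations, lower semicontinuity of the energy for the flow rule, and the same uniqueness test functions $\tau=\sigma_2-\alpha_2+\alpha_1$ and $\tau=\sigma_1-\alpha_1+\alpha_2$. Two steps are organized differently.

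\textbf{Constraint.} You pass to the limit in the closed convex space-time set $\{(\tau,\gamma):|\tau^D|\le\gamma\}$, using $\bar{g}_\dt\to g$ in $L^2(0,T;\Lo)$. You then extend to every $t$ by continuity of $\sigma-\alpha$ and $g$. The paper instead works at each fixed $t$ with $\hat{g}_{\dt_k}(t)$ and a $\sup_{l\ge k}$ bound. That route needs a.e.\ convergence along $t$-dependent subsequences, which is the issue its footnote has to patch, so your version is cleaner.

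\textbf{Flow rule.} You isolate the dissipation $\int_0^T(\bar\xi_\dt,\bar\sigma_\dt-\bar\alpha_\dt)_\Hs\,dt$ and bound its $\limsup$ by a single discrete energy inequality, compared with the exact limit identity. For instance, test the second equation with $\sigmas{n}$ and use $\|\sigmas{n}\|_S^2=\|\sigma_n\|_S^2+2\dt(\xi_n,\sigma_n)_\Hs+\|\sigmas{n}-\sigma_n\|_S^2$; then only the data terms $\langle f_n,v_n\rangle+(h_n,\sigmas{n})_\Hs$ remain on the right. The paper splits the same quantity into the $S\hat\sigma'_\dt$, $\E(\bar v_\dt)$ and $\alpha$ pieces and takes the $\liminf$ of each on $[0,t]$. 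The underlying lower semicontinuity is identical.

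\textbf{Test class.} This is the genuine difference. The paper tests with continuous $\tau$ satisfying $\tau(t)\in K(t;\alpha)$ for all $t$, evaluated at $t_n$. Then $g_n=g(t_n)$ matches exactly and no rescaling is needed. However, that class is not decomposable, and the step from the time-integrated inequality to the pointwise one is only asserted. Your rescaling by $\min(1,\bar{g}_\dt/g)$ yields the full class $\{\tilde\tau\in L^2(0,T;\Hs):\tilde\tau(t)\in\tilde{K}(t)\text{ a.e.}\}$, which is decomposable. You can patch with $\sigma-\alpha$ off a measurable set and use the countable family $\Pop{g(t)}(\zeta)$, with $\zeta$ in a dense subset of $\Hs$. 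This localizes cleanly.

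\textbf{Two small corrections.}
\begin{itemize}
\item Rescale only the deviatoric part of $\tilde\tau$. The error is then bounded by $(g-\bar{g}_\dt)_+$; rescaling the trace too would require dominated convergence instead.
\item $\hat v_\dt$ is not bounded in $L^2(0,T;\Vv)$ when $v_0\notin\Vv$, because of the first interval. This is why the paper introduces $\hat v^\circ_\dt$. It is harmless here, since you only need $\bar v_\dt$ in $L^2(0,T;\Vv)$.
\end{itemize}
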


\section{Proofs}

\subsection{Preliminary result}

We recall the Korn inequality and the discrete Gronwall inequality.

\begin{Lem}[{\cite[Lemma 5.4.18]{Trangenstein13}}]\label{Lem:Korn}
    There exists a constant $c_K>0$ such that 
    \[
        \frac{1}{c_K}\|\varphi\|_\Vv \le \|\E(\varphi)\|_\Hs 
        \le c_K\|\varphi\|_\Vv \quad\mbox{for all }\varphi\in \Vv.
    \]
\end{Lem}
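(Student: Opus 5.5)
The right-hand inequality is elementary. Pointwise we have $|\E(\varphi)| = \tfrac12|\nabla\varphi+(\nabla\varphi)^\Tp| \le |\nabla\varphi|$, hence $\|\E(\varphi)\|_\Hs \le \|\nabla\varphi\|_{L^2} \le \|\varphi\|_\Vv$. So any $c_K\ge1$ works for that side. The substance is the left-hand inequality, which I would prove in two stages: Korn's second inequality without boundary conditions, then a compactness argument that uses $\mathcal{H}^{d-1}(\Gamma_1)>0$.

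\emph{Stage 1 (Korn's second inequality).} On a bounded Lipschitz domain I would show
\[
\|\varphi\|_{H^1(\Omega)}\le C\bigl(\|\varphi\|_{\Hv}+\|\E(\varphi)\|_\Hs\bigr)\qquad\text{for all }\varphi\in H^1(\Omega;\R^d).
\]
The key identity, valid in distributions, is
\[
\partial_j\partial_k\varphi_i=\partial_j\E_{ik}(\varphi)+\partial_k\E_{ij}(\varphi)-\partial_i\E_{jk}(\varphi).
\]
It gives $\nabla(\partial_k\varphi_i)\in H^{-1}(\Omega)$ with norm controlled by $\|\E(\varphi)\|_\Hs$. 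I would then invoke the Lions--Ne\v{c}as lemma: on a Lipschitz domain, a distribution $q$ with $q\in H^{-1}$ and $\nabla q\in H^{-1}$ lies in $L^2$, with
\[
\|q\|_{L^2}\le C\bigl(\|q\|_{H^{-1}}+\|\nabla q\|_{H^{-1}}\bigr).
\]
Applying this to $q=\partial_k\varphi_i$, and using $\|\partial_k\varphi_i\|_{H^{-1}}\le\|\varphi\|_{\Hv}$, yields the estimate. I expect this to be the main obstacle, since the Ne\v{c}as lemma is deep: its proof needs localization and the Lipschitz structure of the boundary. If a self-contained route is wanted, I would first prove it on the whole space (or for smooth compactly supported fields) via the Fourier transform. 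I would then use a partition of unity with local Lipschitz graph charts and extension across the boundary.

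\emph{Stage 2 (removing the $L^2$ term).} I would argue by contradiction. Suppose there are $\varphi_n\in\Vv$ with $\|\varphi_n\|_\Vv=1$ and $\|\E(\varphi_n)\|_\Hs\to0$. By Rellich--Kondrachov, a subsequence converges in $\Hv$. Stage 1 applied to the differences $\varphi_n-\varphi_m$ then shows that this subsequence is Cauchy in $H^1$. Hence $\varphi_n\to\varphi$ in $H^1$, with $\|\varphi\|_\Vv=1$ and $\E(\varphi)=0$. By continuity of the trace, $\varphi=0$ on $\Gamma_1$, so $\varphi\in\Vv$.

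It remains to show $\varphi=0$. A field with vanishing symmetric gradient on a connected domain is a rigid motion $\varphi(x)=c+Bx$ with $B$ skew-symmetric; this follows from the identity in Stage 1, since all second derivatives vanish. If $B\ne0$, its kernel has dimension at most $d-2$, because a real skew matrix has even rank. The zero set of $c+Bx$ is then either empty or an affine subspace of dimension at most $d-2$, which has $\mathcal{H}^{d-1}$-measure zero. This cannot contain $\Gamma_1$ (up to a null set), so $B=0$. Then $c=0$ because $\Gamma_1\ne\emptyset$. Thus $\varphi=0$, contradicting $\|\varphi\|_\Vv=1$.

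This produces the constant $c_K$ in Lemma \ref{Lem:Korn}, taken at least $1$ so that the upper bound also holds. If $\Omega$ is not connected, I would apply the argument componentwise, assuming each component meets $\Gamma_1$ in a set of positive measure; this is implicit in the intended setting.
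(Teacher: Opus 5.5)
Your proof is correct. It differs from the paper only in that the paper gives no argument at all: it imports the inequality wholesale from Lemma 5.4.18 of the Trangenstein reference.

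You instead supply the classical self-contained proof:
\begin{itemize}
\item the elementary upper bound $|\E(\varphi)|\le|\nabla\varphi|$;
\item Korn's second inequality, via the identity $\partial_j\partial_k\varphi_i=\partial_j\E_{ik}(\varphi)+\partial_k\E_{ij}(\varphi)-\partial_i\E_{jk}(\varphi)$ and the Ne\v{c}as lemma;
\item a Rellich compactness/contradiction argument that reduces to rigid motions.
\end{itemize}

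The details check out. The identity is right. The bound $\|\partial_k\varphi_i\|_{H^{-1}}\le\|\varphi\|_{\Hv}$ is right. Applying the second inequality to differences does make the subsequence Cauchy in $H^1$. The even-rank property of real skew matrices correctly shows that a nonzero rigid motion $c+Bx$ vanishes at most on an affine set of dimension $\le d-2$, which is $\mathcal{H}^{d-1}$-null.

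What your route buys is that it makes explicit exactly which hypotheses are used:
\begin{itemize}
\item Lipschitz regularity of $\partial\Omega$, for Ne\v{c}as' lemma and Rellich--Kondrachov;
\item connectedness of $\Omega$, implicit in ``domain'';
\item only $\mathcal{H}^{d-1}(\Gamma_1)>0$, with $\Gamma_1$ not required to be relatively open.
\end{itemize}
The last point is precisely the assumption the paper states. The costs are reliance on the nontrivial Ne\v{c}as lemma and a constant $c_K$ that is non-constructive, since it comes from a contradiction argument. The citation buys brevity, but it leaves to the reader the check that the cited version applies in this mixed boundary setting with such a weak condition on $\Gamma_1$. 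Your rigid-motion step verifies that directly.
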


\begin{Lem}[{\cite[Lemma 5.1]{HR90}}]\label{lem:gronwall}
  Let $\dt,\beta>0$ and let non-negative sequences 
  $(a_k)^N_{k=0}$, $(b_k)^N_{k=0}$, $(c_k)^N_{k=0}$, 
  $(\alpha_k)^N_{k=0} \subset \{x\in\R : x \ge 0\}$ satisfy that
  \[
      a_m+\dt\sum^m_{k=0}b_k
      \le \dt\sum^m_{k=0}\alpha_k a_k+\dt\sum^m_{k=0}c_k+\beta
      \qquad\mbox{for all }m=0,1,\ldots,N.
  \]
  If $\alpha_k\dt<1$ for all $k=0,1,\ldots,N$, then we have
  \[
      a_m+\dt\sum^m_{k=0}b_k
      \le e^C\left(\dt\sum^m_{k=0}c_k+\beta\right)
      \qquad\mbox{for all }m=0,1,\ldots,N,
  \]
  where $C:=\dt\sum^N_{k=0}\frac{\alpha_k}{1-\alpha_k\dt}$.
\end{Lem}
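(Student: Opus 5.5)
This is the discrete Gronwall inequality quoted from \cite{HR90}. I would prove it directly by a one-step recursion for the accumulated term, using only the hypotheses as stated. Set
\[
    \gamma_m \defeq \dt\sum_{k=0}^m c_k+\beta,\qquad s_m\defeq \dt\sum_{k=0}^m \alpha_k a_k,\qquad s_{-1}\defeq 0 .
\]
Since $c_k\ge0$, the sequence $(\gamma_m)$ is nondecreasing. Since $b_k\ge0$, the hypothesis gives both $a_m\le s_m+\gamma_m$ and $a_m+\dt\sum_{k\le m}b_k\le s_m+\gamma_m$. So it suffices to show
\[
    s_m+\gamma_m\le e^C\gamma_m .
\]

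\emph{Recursion.} For each $m$, multiply $a_m\le s_m+\gamma_m$ by $\dt\,\alpha_m\ge0$ to get
\[
    s_m-s_{m-1}=\dt\,\alpha_m a_m\le \dt\,\alpha_m(s_m+\gamma_m).
\]
Because $1-\alpha_m\dt>0$, this can be solved for $s_m$:
\[
    s_m\le \frac{s_{m-1}+\dt\,\alpha_m\gamma_m}{1-\alpha_m\dt}.
\]
Adding $\gamma_m=\gamma_m(1-\alpha_m\dt)/(1-\alpha_m\dt)$ to both sides, the $\alpha_m\dt\gamma_m$ terms cancel, which gives the clean recursion
\[
    s_m+\gamma_m\le \frac{s_{m-1}+\gamma_m}{1-\alpha_m\dt}.
\]

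\emph{Iteration.} Fix $m$. For $k\le m$ we have $\gamma_k\le\gamma_m$, so the recursion yields $s_k+\gamma_m\le (s_{k-1}+\gamma_m)/(1-\alpha_k\dt)$ for $k=0,\dots,m$. Iterating down to $s_{-1}=0$ gives
\[
    s_m+\gamma_m\le\gamma_m\prod_{k=0}^m(1-\alpha_k\dt)^{-1}.
\]

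\emph{Exponential bound.} Write the product as $\exp\bigl(\sum_k -\log(1-\alpha_k\dt)\bigr)$. Use the elementary inequality
\[
    -\log(1-x)\le \frac{x}{1-x}\qquad (0\le x<1),
\]
which follows from $\log y\ge 1-1/y$ with $y=1-x$. Applying it with $x=\alpha_k\dt$ and extending the sum from $k\le m$ to $k\le N$ (all terms are nonnegative) bounds the product by $e^C$ with $C=\dt\sum_{k=0}^N\alpha_k/(1-\alpha_k\dt)$. Combined with $a_m+\dt\sum_{k\le m}b_k\le s_m+\gamma_m$, this gives the claim.

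The only delicate point is that the hypothesis is \emph{implicit}: it contains $\alpha_m a_m$ on the right-hand side. That is why the smallness condition $\alpha_k\dt<1$ is needed, and why the constant involves $1/(1-\alpha_k\dt)$ rather than plain $\alpha_k$. Everything else is routine.
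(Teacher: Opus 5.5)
Your proof is correct and complete. The one-step bound $s_m+\gamma_m\le (s_{m-1}+\gamma_m)/(1-\alpha_m\dt)$ handles the implicit term. Freezing $\gamma_m$ via monotonicity of $\gamma_k$ lets you iterate down to $s_{-1}=0$, and $-\log(1-x)\le x/(1-x)$ then produces exactly the stated constant $C$.

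The paper does not prove this lemma; it simply quotes it from \cite{HR90}, so there is no in-paper argument to compare with. Your recursion-and-product argument is a standard self-contained route, and it shows in passing that $\beta\ge0$ would suffice.

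As a side remark, your explicit form of $C$ is useful for checking the paper's application of the lemma. In the stability proof the implicit coefficient is $\alpha_k=\tfrac12$ for $k\ge1$, which gives $C=T/(2-\dt)\le T$ for $\dt\le1$. So the factor written there as $e$ should be $e^{T}$ unless $T\le1$. This is harmless, since the constant remains independent of $\dt$.
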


We prepare the following lemma, which is derived using the Ascoli--Arzel{\`{a}} theorem.

\begin{Lem}[{\cite[Lemma 5.3]{AM25}}]\label{Lem:AA}
    Let $X$ be a Banach space such that $X^*$ is separable. If the sequence $(u_n)_{n\in\N}\subset C([0,T];X)$ satisfies the following two conditions:
    \begin{enumerate}
        \item[(i)] there exists a constant $c>0$ such that for all $n\in\N$ and $t\in[0,T]$, $\|u_n(t)\|_X < c$,
        \item[(ii)] $(u_n)$ is equicontinuous, i.e., for all $t\in[0,T]$ and $\eps>0$, there exists $\delta>0$ such that if $s\in[0,T]$ satisfies $|s-t|<\delta$, then $\|u_n(s) - u_n(t)\|_X < \eps$ for all $n\in\N$,
    \end{enumerate}
    then there exist a subsequence $(n_k)_{k\in\N}$ and $u\in C([0,T];X_w)$ such that for all $t\in[0,T]$,
    \[
        u_{n_k}(t) \rightharpoonup u(t) \mbox{ weakly in } X
    \]
    as $k\rightarrow\infty$.
\end{Lem}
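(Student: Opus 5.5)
The plan is to reduce the statement to the classical scalar Arzel\`a--Ascoli theorem via a countable dense subset of $X^*$, followed by a diagonal argument. Since $X^*$ is separable, fix a countable dense set $\{f_j\}_{j\in\N}\subset X^*$. For each $j$, consider the scalar functions $\phi_n^j(t)\defeq\langle f_j,u_n(t)\rangle_{X^*,X}$ on $[0,T]$. By (i) they are uniformly bounded by $c\|f_j\|_{X^*}$. By (ii) and $|\phi_n^j(s)-\phi_n^j(t)|\le\|f_j\|_{X^*}\|u_n(s)-u_n(t)\|_X$, they are equicontinuous at every point. Since $[0,T]$ is compact, pointwise equicontinuity upgrades to uniform equicontinuity. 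The classical Arzel\`a--Ascoli theorem then gives, for each fixed $j$, a uniformly convergent subsequence. Extracting successively for $j=1,2,\dots$ and taking the diagonal, I obtain one subsequence $(n_k)$ such that $\langle f_j,u_{n_k}(t)\rangle$ converges uniformly on $[0,T]$ for every $j$.

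Next I pass from $\{f_j\}$ to all of $X^*$ by density. For $f\in X^*$ and $\eps>0$, choose $f_j$ with $\|f-f_j\|_{X^*}<\eps$. Then (i) gives
\[
|\langle f,u_{n_k}(t)-u_{n_l}(t)\rangle|\le 2c\eps+|\langle f_j,u_{n_k}(t)-u_{n_l}(t)\rangle|.
\]
Hence $\langle f,u_{n_k}(t)\rangle$ is Cauchy, uniformly in $t$, for every $f\in X^*$. Define $\ell_t(f)\defeq\lim_k\langle f,u_{n_k}(t)\rangle$. This is linear in $f$ with $|\ell_t(f)|\le c\|f\|_{X^*}$, so $\ell_t\in X^{**}$.

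The main obstacle is identifying $\ell_t$ with an element $u(t)\in X$. Separability of $X^*$ alone does not suffice. For example, with $X=c_0$ and the constant-in-time sequence $u_n=\sum_{i\le n}e_i$, the hypotheses hold, but the weak$^*$ limit $(1,1,\dots)$ lies in $\ell^\infty\setminus c_0$. I will therefore use in addition that $X$ is reflexive. This holds in every application in the paper, where $X$ is one of the Hilbert spaces $\Hv$, $\Hs$. With reflexivity, $X^{**}=X$ canonically, so $\ell_t=u(t)$ for a unique $u(t)\in X$. Then $u_{n_k}(t)\rightharpoonup u(t)$ weakly in $X$ for every $t\in[0,T]$.

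Finally, I check $u\in C([0,T];X_w)$; in fact I expect the stronger conclusion $u\in C([0,T];X)$. By weak lower semicontinuity of the norm,
\[
\|u(s)-u(t)\|_X\le\liminf_{k\to\infty}\|u_{n_k}(s)-u_{n_k}(t)\|_X<\eps
\]
whenever $|s-t|<\delta$, with $\delta$ taken from (ii). This gives norm continuity of $u$, and hence weak continuity.
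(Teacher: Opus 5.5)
Your argument is correct, and so is your objection to the statement. The counterexample works. $c_0^*=\ell^1$ is separable, and the constant sequence $u_n\equiv\sum_{i\le n}e_i$ is bounded and trivially equicontinuous. Any weak limit $x\in c_0$ of a subsequence would have to satisfy $x_i=\lim_k\langle e_i^*,u_{n_k}\rangle=1$ for all $i$, which is impossible. So the lemma as printed, with only separability of $X^*$, is false, and a compactness assumption such as reflexivity of $X$ has to be added. This costs nothing in the paper: the lemma is only needed for the pointwise weak convergences of $\hat{\sigma}_{\dt_k}(t)$ and $\hat{\alpha}_{\dt_k}(t)$ in the Hilbert space $\Hs$.

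The paper gives no proof of its own, only the citation \cite{AM25} and the remark that the result comes from Ascoli--Arzel\`a. The natural vector-valued version of that argument runs as follows: the closed ball of $X$ with the weak topology is metrizable when $X^*$ is separable, and one applies Arzel\`a--Ascoli to maps into this metric space. That route also needs pointwise relative weak compactness, i.e.\ reflexivity in general. So your added hypothesis is exactly the missing ingredient, not an artifact of your method.

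With reflexivity, your route is sound:
\begin{itemize}
\item scalar Arzel\`a--Ascoli for each $f_j$, followed by a diagonal extraction;
\item the $2c\eps$ density estimate, which gives uniform-in-$t$ Cauchy sequences for every $f\in X^*$;
\item reflexivity to identify $\ell_t$ with some $u(t)\in X$.
\end{itemize}
Your last step legitimately upgrades the conclusion to $u\in C([0,T];X)$, which is stronger than the stated $C([0,T];X_w)$. The only slip is harmless: a $\liminf$ of numbers $<\eps$ gives $\le\eps$, not $<\eps$.

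Compared with the metric-space version, your scalarization uses only the classical real-valued theorem and isolates precisely where reflexivity enters. Conversely, once reflexivity is assumed, separability of $X^*$ becomes superfluous. You can extract a diagonal subsequence converging weakly at all rational times, since bounded sequences in a reflexive space have weakly convergent subsequences. Then extend to every $t$ via (ii) with the same Cauchy estimate as in your density step, now using a dense set of times instead of a dense set of functionals.
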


We show some properties of the function $\Pop{R}:\Sym_d\rightarrow\Sym_d$.

\begin{Prop}[{\cite[Proposition 5.4]{AM25}}]\label{Prop:Phi}
    \begin{enumerate}
        \item[(i)] It holds that for all $R_1, R_2\ge0$ and $A \in \Sym_d$,
        \[
            |\Pop{R_1}(A) - \Pop{R_2}(A)| \le |R_1 - R_2|.
        \]
        \item[(ii)] It holds that for all $R\ge0$ and $A, B \in \Sym_d$ with $|B^D|\le R$,
        \[
            (\Pop{R}(A) - A, \Pop{R}(A) - B) \le 0.
        \]
        \item[(iii)] It holds that for all $R\ge0$ and $A, B \in \Sym_d$,
        \[
            |\Pop{R}(A) - \Pop{R}(B)| \le |A - B|.
        \]
    \end{enumerate}
\end{Prop}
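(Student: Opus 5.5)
The plan is to identify $\Pop{R}$ as the Euclidean (Frobenius) projection of $\Sym_d$ onto the closed convex set
\[
K_R\defeq\{B\in\Sym_d:\ |B^D|\le R\}.
\]
The key structural fact is the orthogonal decomposition $\Sym_d=\operatorname{span}\{E_d\}\oplus\Sym_d^D$, where $\Sym_d^D$ denotes the trace-free symmetric matrices. Indeed, $E_d:B^D=\tr B^D=0$, so $A=\frac{\tr A}{d}E_d+A^D$ is an orthogonal splitting.

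The set $K_R$ constrains only the deviatoric component, and that component must lie in the closed ball of radius $R$ in $\Sym_d^D$. Hence the nearest point of $K_R$ to $A$ keeps the spherical part $\frac{\tr A}{d}E_d$ unchanged. Its deviatoric part is the radial projection of $A^D$ onto that ball: it equals $A^D$ if $|A^D|\le R$, and $R\,A^D/|A^D|$ otherwise. This is exactly the definition of $\Pop{R}(A)$. For $R=0$ the formula is still well defined, since the second case only occurs when $|A^D|>0$.

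With this identification, (ii) is the standard variational characterization of the projection onto a closed convex set. For $B\in K_R$ and $P=\Pop{R}(A)$, set $B_\theta=P+\theta(B-P)\in K_R$. Minimality of $|A-B_\theta|^2$ at $\theta=0$ then gives $(P-A,B-P)\ge0$, which is (ii). I could also verify (ii) directly: the spherical parts cancel, so the claim reduces to the Euclidean inequality for the ball, $(\lambda-1)\,(A^D,\ \lambda A^D-B^D)\le0$ with $\lambda=R/|A^D|<1$. This follows from $(A^D,B^D)\le|A^D|R=\lambda|A^D|^2$.

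Next, (iii) follows from (ii) in the usual way. Apply (ii) with $(A,B)\to(A,\Pop{R}(B))$ and with $(A,B)\to(B,\Pop{R}(A))$, and add the two inequalities. This yields $|\Pop{R}(A)-\Pop{R}(B)|^2\le(A-B,\Pop{R}(A)-\Pop{R}(B))$, and Cauchy--Schwarz gives (iii).

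For (i), the difference $\Pop{R_1}(A)-\Pop{R_2}(A)$ is purely deviatoric, so I will argue by cases on $|A^D|$, assuming without loss of generality $R_1\ge R_2$.
\begin{itemize}
\item If $|A^D|\le R_2$, the difference is $0$.
\item If $|A^D|>R_1$, the difference is $(R_1-R_2)A^D/|A^D|$, whose norm is $R_1-R_2$.
\item If $R_2<|A^D|\le R_1$, the difference is $A^D-R_2A^D/|A^D|$, whose norm is $|A^D|-R_2\le R_1-R_2$.
\end{itemize}
None of these steps is a serious obstacle. The only point needing care is checking the orthogonality that justifies treating $\Pop{R}$ as a genuine metric projection, together with the degenerate case $R=0$.
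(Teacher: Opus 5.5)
Your proof is correct and complete. The orthogonal splitting $A=\frac{\tr A}{d}E_d+A^D$ does identify $\Pop{R}$ with the Frobenius projection onto the closed convex set $\{B\in\Sym_d:|B^D|\le R\}$, including the degenerate case $R=0$. The first-order optimality argument (or your direct check, where $\Pop{R}(A)-A$ is purely deviatoric) gives (ii). The summation trick gives (iii), since $\Pop{R}(B)$ itself satisfies $|\Pop{R}(B)^D|\le R$. Your three cases for (i) exhaust all possibilities.

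The paper gives no proof of its own for this proposition; it imports the statement from \cite[Proposition 5.4]{AM25}. So there is no in-text argument to compare your route against. Your projection viewpoint is also how the paper later uses (ii) in Lemma~\ref{Lem:Sineq}: there it serves as the variational inequality characterizing $\sigma_n-\alpha_n=\Pop{g_n}(\sigmas{n}-\alpha_n)$.
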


The following two lemmas provide key inequalities for handling the compliance tensor $S$.

\begin{Lem}[The positive definiteness and continuity of $S$]\label{Lem:posSC}
    There exist two constants $c_S, c_C > 0$ such that 
    \begin{align}\label{ineq:posSC}
        \frac{1}{c_S}\|\tau\|_\Hs^2 \le \|\tau\|_S^2 \le c_S\|\tau\|_\Hs^2,\qquad
        \frac{1}{c_C}\|\tau\|_\Hs^2 \le \|\tau\|_C^2 \le c_C\|\tau\|_\Hs^2
        \qquad\mbox{for all }\tau\in\Hs.
    \end{align}
\end{Lem}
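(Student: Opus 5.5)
The plan is to reduce both bounds to pointwise algebraic estimates for the constant-coefficient linear map $S$ and its inverse $C$ on $\Sym_d$, and then integrate over $\Omega$. I will use the orthogonal splitting $\Sym_d = \{\text{deviatoric}\} \oplus \R E_d$. For $\tau\in\Sym_d$ write $\tau=\tau^D+\frac{\tr\tau}{d}E_d$; these two pieces are orthogonal with respect to $:$. A direct computation from \eqref{def:S} gives
\[
S\tau=\frac{1+\nu}{E}\tau^D+\frac{1-(d-1)\nu}{E}\,\frac{\tr\tau}{d}E_d ,
\]
because $\tr\tau E_d=d\cdot\frac{\tr\tau}{d}E_d$, so the spherical coefficient is $\frac{1+\nu-d\nu}{E}$. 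Hence $S$ acts as multiplication by $\lambda_1=\frac{1+\nu}{E}$ on deviators and by $\lambda_2=\frac{1-(d-1)\nu}{E}$ on multiples of $E_d$. It is therefore symmetric, and its eigenvalues are exactly $\lambda_1$ and $\lambda_2$.

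The hypothesis $-1<\nu<\frac{1}{d-1}$ is precisely the condition $\lambda_1>0$ and $\lambda_2>0$. It follows pointwise that
\[
\min(\lambda_1,\lambda_2)\,|\tau|^2\le S\tau:\tau=\lambda_1|\tau^D|^2+\lambda_2\Big|\tfrac{\tr\tau}{d}E_d\Big|^2\le\max(\lambda_1,\lambda_2)\,|\tau|^2 .
\]
Integrating over $\Omega$ for $\tau\in\Hs$ yields the first pair of inequalities in \eqref{ineq:posSC} with
\[
c_S=\max\big(\lambda_1,\lambda_2,\lambda_1^{-1},\lambda_2^{-1}\big).
\]

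For $C$, I would use that $C=S^{-1}$, as stated in the notations; one can also verify it directly in the same decomposition. Then $C$ acts by $\lambda_1^{-1}$ on deviators and by $\lambda_2^{-1}$ on the spherical part. The same argument gives the second pair of inequalities with $c_C=c_S$. Measurability and integrability are immediate, since $S$ and $C$ are constant linear maps.

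There is no real obstacle here. The only points needing care are the eigenvalue computation for the spherical part and checking that the stated range of $\nu$ makes both eigenvalues positive. That positivity is what makes $\|\cdot\|_S$ and $\|\cdot\|_C$ genuine norms equivalent to $\|\cdot\|_\Hs$.
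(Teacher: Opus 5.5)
Your proposal is correct and follows essentially the same route as the paper. Both split $\tau$ into its deviatoric and spherical parts, obtain $S\tau:\tau=\frac{1+\nu}{E}|\tau^D|^2+\frac{1-(d-1)\nu}{E}\frac{(\tr\tau)^2}{d}$ pointwise (with the reciprocal coefficients for $C$), and take $c_S=c_C=\max\{\lambda_1,\lambda_2,\lambda_1^{-1},\lambda_2^{-1}\}$. The only cosmetic difference is that you read off the coefficients of $C$ from $C=S^{-1}$, whereas the paper expands the explicit formula for $C$ directly.
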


\begin{proof}
    It holds that for all $\tau\in\Hs$, 
    \begin{align*}
        \|\tau\|_S^2
        &= \into (S\tau):\tau dx 
        = \into \left(\frac{1+\nu}{E}|\tau|^2 - \frac{\nu}{E}(\tr \tau)^2\right) dx
        = \into \left(\frac{1+\nu}{E}|\tau^D|^2 + \frac{1-(d-1)\nu}{E}\frac{(\tr \tau)^2}{d}\right) dx\\
        &\ge \frac{\min\{1+\nu,1-(d-1)\nu\}}{E}\into \left(|\tau^D|^2 + \frac{(\tr \tau)^2}{d}\right) dx
        = \frac{\min\{1+\nu,1-(d-1)\nu\}}{E}\|\tau\|_\Hs^2,\\
        \|\tau\|_S^2
        &\le \frac{\max\{1+\nu,1-(d-1)\nu\}}{E}\|\tau\|_\Hs^2,
    \end{align*}
    where we have used $|A|^2 = |A^D|^2 + \frac{1}{d}(\tr A)^2$ for all $A\in\Sym_d$.
    It follows that $1+\nu > 0$ and $1-(d-1)\nu > 0$ since $-1<\nu<\frac{1}{d-1}$. Hence, if we set $c_S\defeq\max\{\frac{1+\nu}{E}, \frac{1-(d-1)\nu}{E}, \frac{E}{1+\nu}, \frac{E}{1-(d-1)\nu}\}$, then the first inequality of \eqref{ineq:posSC} holds.

    On the other hand, it holds that for all $\tau\in\Hs$,
    \begin{align*}
        \|\tau\|_C^2
        &= \into (C\tau):\tau dx 
        = \frac{E}{1+\nu}\into \left(|\tau|^2 + \frac{\nu}{1-(d-1)\nu}(\tr \tau)^2\right) dx \\ 
        &= \into \left(\frac{E}{1+\nu}|\tau^D|^2 + \frac{E}{1-(d-1)\nu}\frac{(\tr \tau)^2}{d}\right) dx 
        \ge \min\left\{\frac{E}{1+\nu},\frac{E}{1-(d-1)\nu}\right\}\|\tau\|_\Hs^2,\\
        \|\tau\|_C^2
        &\le \max\left\{\frac{E}{1+\nu},\frac{E}{1-(d-1)\nu}\right\}\|\tau\|_\Hs^2.
    \end{align*}
    Hence, if we set $c_C\defeq c_S$, then the second inequality of \eqref{ineq:posSC} holds.
\end{proof}

\begin{Lem}\label{Lem:Sineq}
    It holds that for all $n=1,2,\ldots,N$,
	\[
		(\xi_n, \tau - (\sigma_n-\alpha_n))_\Hs \le 0
		\qquad\mbox{for all }\tau\in\Hs\mbox{ with }|\tau^D|\le g_n.
	\]
\end{Lem}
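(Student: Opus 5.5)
The plan is to reduce the claim to the pointwise obtuse-angle property of the projection in Proposition \ref{Prop:Phi}(ii). The link is the identity \eqref{eq:xiequiv}, $\xi_n = S\frac{\sigmas{n}-\sigma_n}{\dt}$, combined with the second relation in \eqref{sol:alphasigma}.

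First I express $\xi_n$ as a nonnegative multiple of a projection residual. From the third equation of (P) and \eqref{sol:alphasigma},
\[
    a\dt\,\xi_n = \alpha_n-\alpha_{n-1} = b(\sigmas{n}-\sigma_n).
\]
By the fourth equation of (P),
\[
    \sigmas{n}-\sigma_n = (\sigmas{n}-\alpha_n) - \Pop{g_n}(\sigmas{n}-\alpha_n).
\]
Hence, with $A\defeq \sigmas{n}-\alpha_n$ pointwise a.e.\ in $\Omega$,
\[
    \xi_n = \frac{b}{a\dt}\bigl(A-\Pop{g_n}(A)\bigr),
\]
and the factor $b/(a\dt)=\frac{1+\nu}{E\dt}$ is strictly positive.

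Alternatively, I can work directly with $\xi_n=S(\sigmas{n}-\sigma_n)/\dt$. The difference $\sigmas{n}-\sigma_n = A-\Pop{g_n}(A)$ is traceless, because $\Pop{g_n}$ preserves the trace. Since $S\tau=\frac{1+\nu}{E}\tau$ for traceless $\tau$, this route gives the same formula.

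Next, I fix $\tau\in\Hs$ with $|\tau^D|\le g_n$ a.e. The fourth equation of (P) gives $\sigma_n-\alpha_n=\Pop{g_n}(A)$. Therefore, pointwise,
\[
    \xi_n:(\tau-(\sigma_n-\alpha_n)) = \frac{1+\nu}{E\dt}\,\bigl(\Pop{g_n}(A)-A\bigr):\bigl(\Pop{g_n}(A)-\tau\bigr).
\]
I apply Proposition \ref{Prop:Phi}(ii) pointwise with $R=g_n(x)\ge0$, $A=A(x)$ and $B=\tau(x)$; this requires $|B^D|\le R$, which holds a.e. The integrand is then $\le0$ a.e., and integrating over $\Omega$ gives the claim.

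The only delicate step is making sure the prefactor is $S$ acting on a traceless matrix, so that it reduces to a positive scalar; otherwise $S$ would spoil the sign. This is secured by \eqref{sys:tr}, which gives $\tr\sigma_n=\tr\sigmas{n}$. Integrability is harmless, since every factor lies in $\Hs$.
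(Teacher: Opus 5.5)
Your proof is correct. It differs from the paper's only in how the compliance tensor $S$ is removed.

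The paper starts from the obtuse-angle inequality of Proposition \ref{Prop:Phi}(ii), written as $(\sigma_n-\sigmas{n},(\sigma_n-\alpha_n)-\tilde\tau)_\Hs\le0$. It then tests with $\tilde\tau=(\sigma_n-\alpha_n)-\frac{E}{1+\nu}S\bigl((\sigma_n-\alpha_n)-\tau\bigr)$, which is admissible because $\tilde\tau^D=\tau^D$. The symmetry of $S$ then moves $S$ onto the residual $\sigma_n-\sigmas{n}$.

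You instead use that $\Pop{g_n}$ preserves the trace. So the residual $\sigmas{n}-\sigma_n=A-\Pop{g_n}(A)$ is deviatoric, and $S$ acts on it as the positive scalar $\frac{1+\nu}{E}$.

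Both arguments rest on the same structural fact: the constraint sees only deviators, and $S$ is a scalar on deviators. The paper applies it on the test-function side, you apply it on the residual side. Your version has the advantage of giving the explicit normality formula $\xi_n=\frac{1+\nu}{E\dt}\bigl(A-\Pop{g_n}(A)\bigr)$, which also shows that $\xi_n$ is deviatoric.

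Your first route through \eqref{sol:alphasigma} is unnecessary. The second one uses only \eqref{eq:xiequiv} and the fourth equation of (P), and it is self-contained.
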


\begin{proof}
From the 4th equation of (P), $\sigma_n - \alpha_n = \Pop{g_n}(\sigmas{n} - \alpha_n )$, we have 
\begin{align*}
    \left(\sigma_n - \sigmas{n},
    (\sigma_n - \alpha_n) - \tilde{\tau} \right)_\Hs \le 0
    \quad\mbox{for all }\tilde{\tau}\in\Hs\mbox{ with }|\tilde{\tau}^D| \le g_n.
\end{align*}
Noting that $(S\tau)^D = \frac{1+\nu}{E}\tau^D$, for any $\tau\in\Hs$,
\begin{align*}
    &\left|\left((\sigma_n - \alpha_n) - \frac{E}{1+\nu}S\left((\sigma_n - \alpha_n) - \tau\right)\right)^D\right|
    = \left|(\sigma_n - \alpha_n)^D - \left((\sigma_n - \alpha_n)^D - \tau^D\right)\right|
    = |\tau^D|.
\end{align*}
Hence, by setting $\tilde{\tau} \defeq (\sigma_n - \alpha_n) - \frac{E}{1+\nu}S\left((\sigma_n - \alpha_n) - \tau\right)~(\tau\in\Hs\mbox{ with }|\tau^D| \le g_n)$, 
\begin{align*}
    \left(\sigma_n - \sigmas{n},
    S\left((\sigma_n - \alpha_n) - \tau\right) \right)_\Hs
    = \left(S(\sigma_n - \sigmas{n}),
    (\sigma_n - \alpha_n) - \tau \right)_\Hs \le 0.
\end{align*}
Since $\xi_n = S\frac{\sigmas{n}-\sigma_n}{\dt}$,
\begin{align*}
    \left(\xi_n, \tau - (\sigma_n - \alpha_n) \right)_\Hs \le 0.
\end{align*}
\end{proof}

\subsection{Stability}

In this subsection, we prove Theorem \ref{Th:stab}.

First, we show (i). By substituting $\varphi=v_n$ into the first equation of (P) and using Korn's inequality, we have
\begin{align*}
	&\frac{1}{2\dt}\left(\|v_n\|_\Hv^2 - \|v_{n-1}\|_\Hv^2
	+ \|v_n - v_{n-1}\|_\Hv^2\right)
	+ \eta \|\E(v_n)\|_\Hs^2 + (\sigmas{n},\E(v_n))_\Hs\\
	=~& \blaket{f_n}{v_n} 
	\le c_K\|f_n\|_{\Vv^*}\|\E(v_n)\|_\Hs
	\le \frac{\eta}{2}\|\E(v_n)\|_\Hs^2 + \frac{c_K^2}{2\eta}\|f_n\|_{\Vv^*}^2.
\end{align*}
On the other hand, taking the $\Hs$-inner product of the second equation of (P) with $\sigmas{n}$, we obtain
\begin{align*}
	&\frac{1}{2\dt}\left(\|\sigmas{n}\|_S^2 - \|\sigma_{n-1}\|_S^2
	+ \|\sigmas{n} - \sigma_{n-1}\|_S^2\right)
	- (\E(v_n),\sigmas{n})_{\Hs}\\
    =~ &(h_n, \sigmas{n})_{\Hs}
    \le \|h_n\|_\Hs \|\sigmas{n}\|_\Hs
    \le c_S^{1/2} \|h_n\|_\Hs \|\sigmas{n}\|_S
	\le c_S \|h_n\|_\Hs^2 + \frac{1}{4}\|\sigmas{n}\|_S^2.
\end{align*}
By adding the above two inequalities, we obtain
\begin{align}\label{ineq:stab1}\begin{aligned}
	&\frac{1}{\dt}\left(\|v_n\|_\Hv^2 - \|v_{n-1}\|_\Hv^2
	+ \|\sigmas{n}\|_S^2 - \|\sigma_{n-1}\|_S^2\right)
    + \eta \|\E(v_n)\|_\Hs^2 
	+ \frac{1}{\dt}\|v_n - v_{n-1}\|_\Hv^2 + \frac{1}{\dt}\|\sigmas{n} - \sigma_{n-1}\|_S^2\\
	\le~& \frac{c_K^2}{\eta}\|f_n\|_{\Vv^*}^2 + 2c_S\|h_n\|_\Hs^2 + \frac{1}{2}\|\sigmas{n}\|_S^2.
\end{aligned}\end{align}
Putting $\tau \defeq 0$ in Lemma \ref{Lem:Sineq}, from the definition of $\xi_n$ and the fourth equation of (P), we have
\begin{align}\label{ineq:stab2}\begin{aligned}
	&0 \ge (\xi_n, -\sigma_n + \alpha_n)
	= \left(S\frac{\sigmas{n}-\sigma_n}{\dt}, - \sigma_n \right)_\Hs
  	+ \frac{1}{a}\left(\frac{\alpha_n - \alpha_{n-1}}{\dt}, \alpha_n \right)_\Hs\\
	=~& \frac{1}{2\dt}\left(\|\sigma_n\|_S^2 - \|\sigmas{n}\|_S^2
	+ \|\sigma_n - \sigmas{n}\|_S^2
	+ \frac{1}{a}\left(\|\alpha_n\|_\Hs^2 - \|\alpha_{n-1}\|_\Hs^2
	+ \|\alpha_n - \alpha_{n-1}\|_\Hs^2\right)\right).
\end{aligned}\end{align}
If we set $\sigmas{0}\defeq\sigma_0$ and $\alpha_{-1}\defeq\alpha_0$, then the right-hand side of \eqref{ineq:stab2} also vanishes for $n=0$.
Hence, adding \eqref{ineq:stab2} with $n$ replaced by $n-1$ to \eqref{ineq:stab1}, we eliminate $\|\sigma_{n-1}\|_S^2$:
\begin{align*}
  &\|v_n\|_\Hv^2 - \|v_{n-1}\|_\Hv^2
  + \|\sigmas{n}\|_S^2 - \|\sigmas{n-1}\|_S^2
  +\frac{1}{a}\left(\|\alpha_{n-1}\|_\Hs^2 - \|\alpha_{n-2}\|_\Hs^2\right)
  + \eta \dt\|\E(v_n)\|_\Hs^2 + \|v_n - v_{n-1}\|_\Hv^2\\
  \le~& \dt\left(\frac{c_K^2}{\eta}\|f_n\|_{\Vv^*}^2 + 2c_S\|h_n\|_\Hs^2 + \frac{1}{2}\|\sigmas{n}\|_S^2\right),
\end{align*}
By summing up for $n = 1,2,\ldots,m$, where $m\le N$, we obtain
\begin{align*}\begin{aligned}
    &\|v_m\|_\Hv^2 - \|v_0\|_\Hv^2
    + \|\sigmas{m}\|_S^2 - \|\sigma_0\|_S^2
    +\frac{1}{a}\left(\|\alpha_{m-1}\|_\Hs^2 - \|\alpha_0\|_\Hs^2\right)
    + \eta\dt\sum_{n=1}^m\|\E(v_n)\|_\Hs^2 
    + \sum_{n=1}^m\|v_n - v_{n-1}\|_\Hv^2\\
    \le~& \dt\sum_{n=1}^m\left(\frac{c_K^2}{\eta}\|f_n\|_{\Vv^*}^2 + 2c_S\|h_n\|_\Hs^2 + \frac{1}{2}\|\sigmas{n}\|_S^2\right),
\end{aligned}\end{align*}
where we have used $\sigmas{0} = \sigma_0$ and $\alpha_{-1} = \alpha_0$.
By the discrete Gronwall inequality and Lemma \ref{Lem:Korn}, if $\dt\le1$, then it holds that for all $m=1,2,\ldots,N$,
\begin{align*}\begin{aligned}
    &\|v_m\|_\Hv^2 + \|\sigmas{m}\|_S^2 
    +\frac{1}{a}\|\alpha_{m-1}\|_\Hs^2
    + \frac{\eta}{c_K^2}\dt\sum_{n=1}^m\|v_n\|_\Vv ^2 
    + \sum_{n=1}^m\|v_n - v_{n-1}\|_\Hv^2\\
    \le~& e\Bigg(\|v_0\|_\Hv^2 + \|\sigma_0\|_S^2 
    + \frac{1}{a}\|\alpha_0\|_\Hs^2
    + \dt\sum_{n=1}^m\left(\frac{c_K^2}{\eta}\|f_n\|_{\Vv^*}^2 + 2c_S\|h_n\|_\Hs^2\right)\Bigg).
\end{aligned}\end{align*}
Since we have that for all $n=1,2,\ldots,N$,
\begin{align*}\begin{aligned}
    \|f_n\|_{\Vv^*}^2
    &= \left\|\frac{1}{\dt}\int_{t_{n-1}}^{t_n}f(s)ds\right\|_{\Vv^*}^2
    \le \frac{1}{\dt}\|f\|_{L^2(t_{n-1},t_n;\Vv^*)}^2,\quad
    \|h_n\|_{\Hs}^2
    \le \frac{1}{\dt}\|h\|_{L^2(t_{n-1},t_n;\Hs)}^2,
\end{aligned}\end{align*}
dropping the nonnegative term $\frac{1}{a}\|\alpha_{m-1}\|_\Hs^2$, we obtain that for all $t\in[0,T]$,
\begin{align}\label{ineq:stabfinal1}\begin{aligned}
    &\|\bar{v}_\dt(t)\|_\Hv^2 
    + \|\bar{\sigma}_\dt^*(t)\|_S^2
    + \int_0^t\left(\|\bar{v}_\dt(s)\|_\Vv^2
    + \frac{1}{\dt}\|\hat{v}_\dt(s)-\bar{v}_\dt(s)\|_\Hv^2 \right)ds\\
    \le~& c_1(\|v_0\|_\Hv^2 + \|\sigma_0\|_S^2 + \|\alpha_0\|_\Hs^2 + \|f\|_{L^2(0,T;\Vv^*)}^2 + \|h\|_{L^2(0,T;\Hs)}^2)
\end{aligned}\end{align}
where $c_1=e\max\{1,\frac{1}{a},\frac{c_K^2}{\eta},2c_S\}\times\max\{1,a,\frac{c_K^2}{\eta}\}$. 

On the other hand, by adding \eqref{ineq:stab2} to \eqref{ineq:stab1}, we eliminate the term $\|\sigmas{n}\|_S^2$ on the left-hand side. 
Dropping the nonnegative terms $\eta \dt\|\E(v_n)\|_\Hs^2$ and $\|v_n-v_{n-1}\|_\Hv^2$, we obtain
\begin{align*}
    &\|v_n\|_\Hv^2 - \|v_{n-1}\|_\Hv^2
    + \|\sigma_{n}\|_S^2 - \|\sigma_{n-1}\|_S^2
    +\frac{1}{a}\left(\|\alpha_n\|_\Hs^2 - \|\alpha_{n-1}\|_\Hs^2\right)\\
    &+ \frac{1}{3}\|\sigma_n - \sigma_{n-1}\|_S^2
    + \frac{1}{3}\|\sigma_n - \sigmas{n}\|_S^2
    + \frac{1}{a}\|\alpha_n - \alpha_{n-1}\|_\Hs^2\\
    \le~& \dt\left(\frac{c_K^2}{\eta}\|f_n\|_{\Vv^*}^2 + 2c_S\|h_n\|_\Hs^2 + \frac{1}{2}\|\sigmas{n}\|_S^2\right),
\end{align*}
where we have used $\|\sigma_n-\sigmas{n}\|_S^2 + \|\sigma_n-\sigma_{n-1}\|_S^2 \le 3\|\sigma_n-\sigmas{n}\|_S^2 + 2\|\sigmas{n}-\sigma_{n-1}\|_S^2 \le 3(\|\sigma_n-\sigmas{n}\|_S^2 + \|\sigmas{n}-\sigma_{n-1}\|_S^2)$.
By summing up for $n = 1,2,\ldots,m$, where $m\le N$, we obtain
\begin{align*}\begin{aligned}
    &\|v_m\|_\Hv^2 - \|v_0\|_\Hv^2
    + \|\sigma_{m}\|_S^2 - \|\sigma_{0}\|_S^2
    +\frac{1}{a}\left(\|\alpha_m\|_\Hs^2 - \|\alpha_{0}\|_\Hs^2\right)\\
    &+ \sum_{n=1}^m\left(\frac{1}{3}\|\sigma_n - \sigma_{n-1}\|_S^2
    + \frac{1}{3}\|\sigma_n - \sigmas{n}\|_S^2
    + \frac{1}{a}\|\alpha_n - \alpha_{n-1}\|_\Hs^2\right)\\
    \le~& \dt\sum_{n=1}^m\left(\frac{c_K^2}{\eta}\|f_n\|_{\Vv^*}^2 + 2c_S\|h_n\|_\Hs^2 + \frac{1}{2}\|\sigmas{n}\|_S^2\right).
\end{aligned}\end{align*}
Hence, it holds that for all $t\in[0,T]$,
\begin{align}\label{ineq:stabfinal2}\begin{aligned}
    &\|\bar{v}_\dt(t)\|_\Hv^2 
    + \|\bar{\sigma}_\dt(t)\|_S^2
    + \|\bar{\alpha}_\dt(t)\|_\Hs^2\\
    &+ \frac{1}{\dt}\int_0^t\left(\|\hat{\sigma}_\dt(s)-\bar{\sigma}_\dt(s)\|_S^2
    + \|\bar{\sigma}_\dt(s)-\bar{\sigma}^*_\dt(s)\|_S^2
    + \|\hat{\alpha}_\dt(s)-\bar{\alpha}_\dt(s)\|_\Hs^2 \right)ds\\
    \le~& c_2(\|v_0\|_\Hv^2 + \|\sigma_0\|_S^2 + \|\alpha_0\|_\Hs^2 + \|f\|_{L^2(0,T;\Vv^*)}^2 + \|h\|_{L^2(0,T;\Hs)}^2+ \|\bar{\sigma}^*_\dt\|^2_{L^2(0,T;\Hs)}),
\end{aligned}\end{align}
where $c_2=\max\{\frac{c_K^2}{\eta},2c_S,\frac{1}{2}\}\times\max\{a,3\}$. 

Furthermore, by the first equation of (P) and the Korn inequality, it holds that 
\begin{align}\label{ineq:stabfinal3}\begin{aligned}
    \left\|\hat{v}'_\dt\right\|_{L^2(0,T;\Vv^*)}^2
    =~& \dt\sum_{n=1}^N \left(\sup_{0 \ne \varphi \in \Vv}\frac{1}{\|\varphi\|_\Vv}\left|\inner{\frac{v_n-v_{n-1}}{\dt}}{\varphi}{\Hv}\right|\right)^2 \\
    =~& \dt\sum_{n=1}^N \left(\sup_{0 \ne \varphi \in \Vv}\frac{\left|
        - \eta (\E(v_n),\E(\varphi))_\Hs - (\sigmas{n},\E(\varphi))_\Hs 
        + \blaket{f_n}{\varphi}\right|}{\|\varphi\|_\Vv}\right)^2\\
    \le~& \dt\sum_{n=1}^N \left(\sup_{0 \ne \varphi \in \Vv}\frac{
        \eta\|\E(v_n)\|_\Hs\|\E(\varphi)\|_\Hs 
    + \|\sigmas{n}\|_\Hs\|\E(\varphi)\|_\Hs 
    + \|f_n\|_{\Vv^*}\|\varphi\|_\Vv}{\|\varphi\|_\Vv}\right)^2\\
    \le~& \dt\sum_{n=1}^N \left(c_K\eta\|\E(v_n)\|_\Hs + c_K\|\sigmas{n}\|_\Hs + \|f_n\|_{\Vv^*}\right)^2\\
    \le~& 3c_K^2\eta^2\|\E(\bar{v}_\dt)\|_{L^2(0,T;\Hs)}^2 + 3c_K^2\|\bar{\sigma}_\dt^*\|_{L^2(0,T;\Hs)}^2 + 3\|f\|_{L^2(0,T;\Vv^*)}^2.
\end{aligned}\end{align}
By the equivalence of the norms $\|\cdot\|_S$ and $\|\cdot\|_\Hs$ provided by Lemma \ref{Lem:posSC}, (i) follows from \cref{ineq:stabfinal1,ineq:stabfinal2,ineq:stabfinal3}.

Next, we show (ii). Putting $\tau \defeq \Pop{g_n}(\sigma_{n-1} - \alpha_{n-1})$ in Lemma \ref{Lem:Sineq}, we have
\begin{align*}
    0 \ge~& \frac{1}{\dt}(\xi_n, \Pop{g_n}(\sigma_{n-1} - \alpha_{n-1}) - (\sigma_n-\alpha_n))_\Hs\\
    =~& \frac{1}{\dt}(\xi_n, \Pop{g_n}(\sigma_{n-1} - \alpha_{n-1}) 
    - \Pop{g_{n-1}}(\sigma_{n-1} - \alpha_{n-1}) 
    + (\sigma_{n-1} - \alpha_{n-1}) - (\sigma_n-\alpha_n))_\Hs\\
    =~& \left(\xi_n, \frac{\Pop{g_n}(\sigma_{n-1} - \alpha_{n-1}) 
    - \Pop{g_{n-1}}(\sigma_{n-1} - \alpha_{n-1})}{\dt} 
    - \frac{\sigma_n - \sigma_{n-1}}{\dt}
    + \frac{\alpha_n - \alpha_{n-1}}{\dt} \right)_\Hs,
\end{align*}
where we have used $\Pop{g_{n-1}}(\sigma_{n-1} - \alpha_{n-1}) = \sigma_{n-1} - \alpha_{n-1}$.
For the first term, by Proposition \ref{Prop:Phi}, we have 
\begin{align*}
    &\left|\left(\xi_n, 
    \frac{\Pop{g_n}(\sigma_{n-1} - \alpha_{n-1}) - \Pop{g_{n-1}}(\sigma_{n-1} - \alpha_{n-1})}{\dt} \right)_\Hs\right|
    \le \|\xi_n\|_\Hs \left\|\frac{g_n - g_{n-1}}{\dt}\right\|_{L^2(\Omega)}\\
    \le~& \frac{1}{4c_C}\|\xi_n\|_\Hs^2 
    + c_C\left\|\frac{g_n - g_{n-1}}{\dt}\right\|_{L^2(\Omega)}^2.
\end{align*}
On the other hand, from the 2nd equation of (P), it holds that 
\begin{align*}
    \|\E(v_n) + h_n\|_C^2
    &= \into \left(C\left(\xi_n + S\frac{\sigma_n - \sigma_{n-1}}{\dt}\right)\right):\left(\xi_n + S\frac{\sigma_n - \sigma_{n-1}}{\dt}\right) dx\\
    &= \|\xi_n\|_C^2 
    + 2\left(\xi_n, \frac{\sigma_n - \sigma_{n-1}}{\dt}\right)_\Hs 
    + \left\|\frac{\sigma_n - \sigma_{n-1}}{\dt}\right\|_S^2.
\end{align*}
Hence, for the second term and Lemma \ref{Lem:posSC}, we obtain
\begin{align*}
    - \left(\xi_n, \frac{\sigma_n - \sigma_{n-1}}{\dt}\right)_\Hs 
    =~& \frac{1}{2}\left( \|\xi_n\|_C^2 
    + \left\|\frac{\sigma_n - \sigma_{n-1}}{\dt}\right\|_S^2
    - \|\E(v_n) + h_n\|_C^2\right)\\
    \ge~& \frac{1}{2c_C}\|\xi_n\|_\Hs^2 
    + \frac{1}{2c_S}\left\|\frac{\sigma_n - \sigma_{n-1}}{\dt}\right\|_\Hs^2
    - c_C c_K^2\|v_n\|_\Vv^2 - c_C\|h_n\|_\Hs^2.
\end{align*}
For the third term, the third equation of (P) implies
\begin{align*}
    \left(\xi_n, \frac{\alpha_n - \alpha_{n-1}}{\dt}\right)_\Hs
    = \frac{1}{a}\left\|\frac{\alpha_n - \alpha_{n-1}}{\dt}\right\|_\Hs^2.
\end{align*}
Summarizing
\begin{align*}
    &\frac{1}{4c_C}\|\xi_n\|_\Hs^2 
    + \frac{1}{2c_S}\left\|\frac{\sigma_n - \sigma_{n-1}}{\dt}\right\|_\Hs^2
    + \frac{1}{a}\left\|\frac{\alpha_n - \alpha_{n-1}}{\dt}\right\|_\Hs^2
    \le c_C\left(\left\|\frac{g_n - g_{n-1}}{\dt}\right\|_{L^2(\Omega)}^2
    + c_K^2\|v_n\|_\Vv^2 + \|h_n\|_\Hs^2\right).
\end{align*}
By summing up for $n = 1,2,\ldots,N$, we obtain for all $t\in[0,T]$,
\begin{align*}\begin{aligned}
    &\|\bar{\xi}_\dt\|_{L^2(0,T;\Hs)}^2 
    + \left\|\hat{\sigma}'_\dt\right\|_{L^2(0,T;\Hs)}^2
    + \left\|\hat{\alpha}'_\dt\right\|_{L^2(0,T;\Hs)}^2\\
    \le~& c_3\left(\|g\|_{H^1(0,T;L^2(\Omega))}^2
    + \|\bar{v}_\dt\|_{L^2(0,T;\Vv)}^2 
    + \|\bar{h}_\dt\|_{L^2(0,T;\Hs)}^2\right),
\end{aligned}\end{align*}
where $c_3=c_C\max\{c_K^2,1\}\times\max\{4c_C,2c_S,a\}$ and we have used
\begin{align*}
    \left\|\frac{g_n - g_{n-1}}{\dt}\right\|_{L^2(\Omega)}^2
    \le \left\|\frac{1}{\dt}\int_{t_{n-1}}^{t_n}g'(s)ds\right\|_{L^2(\Omega)}^2
    \le \frac{1}{\dt}\|g\|_{H^1(t_{n-1},t_n;L^2(\Omega))}^2
\end{align*}
for all $n=1,2,\ldots,N$. Therefore, we obtain (ii).

\subsection{Existence and uniqueness of the solution to Problem \ref{Prob}}

In this subsection, we prove Theorem \ref{Th:exist}.

(Existence) By Theorem \ref{Th:stab}, there exist a sequence $(\dt_k)_{k\in\N}$ and three functions 
$v \in H^1(0,T;\Vv^*)\cap L^2(0,T;\Vv)$ (in particular, $v\in C([0,T];\Hv)$), $\sigma \in H^1(0,T;\Hs)$, and $\alpha \in H^1(0,T;\Hs)$ such that $\dt_k\rightarrow 0$ and for all $t\in[0,T]$
\begin{align}\label{convvhwh}
    \hat{v}_{\dt_k}\rightharpoonup v
    &\quad\text{weakly in }H^1(0,T;\Vv^*),\\
    &\quad\begin{aligned}\label{convvhwl}
        \text{weakly star in }L^\infty([0,T];\Hv),
    \end{aligned}\\
    \hat{v}_{\dt_k}(t) \rightarrow v(t)
    &\quad\begin{aligned}\label{convvhwc}
        \text{strongly in }\Hv,
    \end{aligned}\\
    \bar{v}_{\dt_k}\rightharpoonup v
    &\quad\begin{aligned}\label{convvbwl}
        \text{weakly in }L^2(0,T;\Vv),
    \end{aligned}\\
    \hat{\sigma}_{\dt_k} \rightharpoonup \sigma
    &\quad\begin{aligned}\label{convshwh}
        \text{weakly in }H^1(0,T;\Hs),
    \end{aligned}\\
    \hat{\sigma}_{\dt_k}(t) \rightharpoonup \sigma(t)
    &\quad\begin{aligned}\label{convshwc}
        \text{weakly in }\Hs,
    \end{aligned}\\
    \bar{\sigma}^*_{\dt_k} \rightharpoonup \sigma
    &\quad\begin{aligned}\label{convsswl}
        \text{weakly star in }L^\infty(0,T;\Hs),
    \end{aligned}\\
    \bar{\sigma}_{\dt_k} \rightharpoonup \sigma
    &\quad\begin{aligned}\label{convsbwl}
        \text{weakly star in }L^\infty(0,T;\Hs),
    \end{aligned}\\
    \hat{\alpha}_{\dt_k} \rightharpoonup \alpha
    &\quad\begin{aligned}\label{convahwh}
        \text{weakly in }H^1(0,T;\Hs),
    \end{aligned}\\
    \hat{\alpha}_{\dt_k}(t) \rightharpoonup \alpha(t)
    &\quad\begin{aligned}\label{convahwc}
        \text{weakly in }\Hs,
    \end{aligned}\\
    \bar{\alpha}_{\dt_k} \rightharpoonup \alpha
    &\quad\begin{aligned}\label{convabwl}
        \text{weakly star in }L^\infty(0,T;\Hs),
    \end{aligned}\\
    \bar{\xi}_{\dt_k}\rightharpoonup \xi
    &\quad\begin{aligned}\label{convxbwl}
        \text{weakly in }L^2(0,T;\Hs),
    \end{aligned}
\end{align}
as $k\rightarrow\infty$. It should be noted that 
    $(\hat{v}_{\dt_k})_{k\in\N}$ and $(\bar{v}_{\dt_k})_{k\in\N}$ possess a common limit function $v$ and 
    $(\hat{\sigma}_{\dt_k})_{k\in\N}$, $(\bar{\sigma}^*_{\dt_k})_{k\in\N}$ and $(\bar{\sigma}_{\dt_k})_{k\in\N}$ possess a common limit function $\sigma$ and 
    $(\hat{\alpha}_{\dt_k})_{k\in\N}$ and $(\bar{\alpha}_{\dt_k})_{k\in\N}$ possess a common limit function $\alpha$. 
Indeed, the weak convergences \cref{convvhwh,convvhwl,convvbwl,convshwh,convshwc,convsswl,convsbwl,convahwh,convahwc,convabwl,convxbwl} immediately follow by Theorem \ref{Th:stab} and Lemma \ref{Lem:AA}.
If we define $\hat{v}_\dt^\circ \in C([0,T];\Vv)$ by 
\[
    \hat{v}_\dt^\circ(t)\defeq\left\{\begin{aligned}
        &v_1 &&\mbox{ if }t\in[0,\dt],\\
        &\hat{v}_\dt(t) &&\mbox{ if }t\in[\dt,T],
    \end{aligned}\right.
\]
then we obtain that $(\hat{v}_\dt^\circ)_{0<\dt<1}$ is bounded in $H^1(0,T;\Vv^*)$ and $L^\infty(0,T;\Hv)$, and hence, 
\[
    \hat{v}_{\dt_k}^\circ \rightarrow v 
    \quad\mbox{strongly in }C([0,T];\Hv)
\]
as $k\rightarrow\infty$, by the Aubin--Lions theorem \cite[Theorem II.5.16]{BF13}.
For all $t\in(0,T]$, there exists $l\in\N$ such that $\dt_l \le t$. Since it holds that $\hat{v}_{\dt_k}^\circ(t)=\hat{v}_{\dt_k}(t)$ for all $k \ge l$, \eqref{convvhwc} holds.

Next, we demonstrate that the limit functions $(v, \sigma, \alpha)$ satisfy \eqref{sys:Prob} and $\sigma(t) \in K(t;\alpha)$ for all $t \in [0, T]$.
By the fourth equation of (P) with $\dt\defeq\dt_k$, it holds that for all $t\in[t_{n-1},t_n],n=1,2,\ldots,N$,
\[\begin{aligned}
    |(\hat{\sigma}_{\dt_k}(t) - \hat{\alpha}_{\dt_k}(t))^D| 
    &= \left|\frac{t-t_{n-1}}{\dt}(\sigma_n-\alpha_n)^D 
    + \frac{t_n-t}{\dt}(\sigma_{n-1} - \alpha_{n-1})^D\right|\\
    &\le \frac{t-t_{n-1}}{\dt}g_n + \frac{t_n-t}{\dt}g_{n-1}
    = \hat{g}_{\dt_k}(t)
\end{aligned}\]
a.e. on $\Omega$, i.e. $\hat{\sigma}_{\dt_k}(t) - \hat{\alpha}_{\dt_k}(t) \in \left\{\tau\in\Hs : |\tau^D|\le \hat{g}_{\dt_k}(t) \mbox{ a.e. in }\Omega\right\}$ for all $t\in[0,T]$. 
By $g\in H^1(0,T;\Lo)$, we have that $\hat{g}_\dt \rightarrow g$ strongly in $C([0,T];\Lo)$. By the Riesz--Fischer theorem, for each \(t\in[0,T]\) we have \(\hat g_{\dt_k}(t)\to g(t)\) a.e. in \(\Omega\) along a (possibly \(t\)-dependent) subsequence\footnote{
    One cannot, in general, extract a single subsequence yielding a.e. convergence for every \(t\in[0,T]\).
    One may first obtain a.e. convergence on a countable dense set of \([0,T]\) by a diagonal argument, and then extend the constraint $\sigma(t)\in K(t;\alpha)$ to all \(t\in[0,T]\) by using the continuity \(g\in C([0,T];L^2(\Omega))\) and \(\sigma-\alpha\in C([0,T];\Hs)\).
}. For fixed $k\in\N$ and $t\in[0,T]$, since 
\[
    \left\{\tau\in\Hs : |\tau^D|\le \sup_{l \ge k}\hat{g}_{\dt_l}(t) \mbox{ a.e. in }\Omega\right\}    
\] 
is closed and convex in the strong topology of $\Hs$, it is also closed and convex in the weak topology of $\Hs$. By \eqref{convshwc}, we obtain for all $k\in\N$,
\[
    \sigma(t)-\alpha(t) \in \left\{\tau\in\Hs : |\tau^D|\le \sup_{l \ge k}\hat{g}_{\dt_l}(t) \mbox{ a.e. in }\Omega\right\},
\]
and hence, 
\[\begin{aligned}
    \sigma(t)-\alpha(t) &\in \bigcap_{k\in\N}\left\{\tau\in\Hs : |\tau^D|\le \sup_{l \ge k}\hat{g}_{\dt_l}(t) \mbox{ a.e. in }\Omega\right\}\\
    &= \left\{\tau\in\Hs : |\tau^D|\le \inf_{k\in\N}\sup_{l \ge k}\hat{g}_{\dt_l}(t) \mbox{ a.e. in }\Omega\right\}\\
    &= \left\{\tau\in\Hs : |\tau^D|\le g(t) \mbox{ a.e. in }\Omega\right\}
    = \tilde{K}(t),
\end{aligned}\]
i.e. $\sigma(t)\in K(t;\alpha)$ for all $t\in[0,T]$.

By the first equation of (P) with $\dt\defeq\dt_k$, it holds that for all $\varphi\in\Vv$ and $\theta\in C^\infty_0(0,T)$,
\begin{align*}
    &\int_0^T\left(\blaket{\hat{v}'_{\dt_k}}{\theta\varphi}
    + \eta(\E(\bar{v}_{\dt_k}), \E(\theta\varphi))_\Hs + (\bar{\sigma}^*_{\dt_k}, \E(\theta\varphi))_\Hs \right)dt
    = \int_0^T\blaket{\bar{f}_{\dt_k}}{\theta\varphi} dt.
\end{align*}
By taking $k\rightarrow\infty$, we obtain that for all $\varphi\in\Vv$ and $\theta\in C^\infty_0(0,T)$,
\begin{align*}
    \int_0^T\left(\blaket{v'}{\theta\varphi}
    + \eta(\E(v), \E(\theta\varphi))_\Hs + (\sigma, \E(\theta\varphi))_\Hs \right)dt
    = \int_0^T\blaket{f}{\theta\varphi} dt,
\end{align*}
which implies the first equation of \eqref{sys:Prob}. 

The third equation of (P) with $\dt\defeq\dt_k$ directly implies the third equation of \eqref{sys:Prob} by taking $k\rightarrow\infty$. Similarly, the definition of $\xi_n$ in (P) with $\dt\defeq\dt_k$ directly implies $\xi = \E(v) + h - S\sigma'$ by taking $k\rightarrow\infty$.

By Lemma \ref{Lem:Sineq} and the third equation of (P), it holds that for all $t \in (t_{n-1},t_n), n=1,2,\ldots,N$ and $\tau \in C([0,T];\Hs)$ with $\tau(t) \in K(t;\alpha)$,
\begin{align*}
    0 \ge~& (-\xi_n, (\sigma_n - \alpha_n) - (\tau(t_n) - \alpha(t_n)))_\Hs\\
    =~& \left(S\frac{\sigma_n - \sigma_{n-1}}{\dt} - \E(v_n) - h_n, 
    \sigma_n - \tau(t_n)\right)_\Hs
    + \frac{1}{a}\left(\frac{\alpha_n-\alpha_{n-1}}{\dt}, \alpha_n\right)_\Hs
    - (\xi_n,\alpha(t_n))_\Hs\\
    =~& \left(S\hat{\sigma}'_{\dt_k}(t) 
    - \E(\bar{v}_{\dt_k}(t)) - \bar{h}_{\dt_k}(t), 
    \bar{\sigma}_{\dt_k}(t) - \tau_{\dt_k}(t)\right)_\Hs\\
    &+ \frac{1}{2a\dt}(\|\hat{\alpha}_{\dt_k}(t_n)\|_\Hs^2 
    - \|\hat{\alpha}_{\dt_k}(t_{n-1})\|_\Hs^2)
    - (\bar{\xi}_{\dt_k}(t),\alpha_{\dt_k}(t))_\Hs\\
    \ge~& \left(S\hat{\sigma}'_{\dt_k}(t) 
    - \E(\bar{v}_{\dt_k}(t)) - \bar{h}_{\dt_k}(t), 
    \bar{\sigma}_{\dt_k}(t) - \tau_{\dt_k}(t)\right)_\Hs\\
    &+ \frac{1}{2a(t-t_{n-1})}(\|\hat{\alpha}_{\dt_k}(t)\|_\Hs^2 
    - \|\hat{\alpha}_{\dt_k}(t_{n-1})\|_\Hs^2)
    - (\bar{\xi}_{\dt_k}(t),\alpha_{\dt_k}(t))_\Hs,
\end{align*}
where $\tau_{\dt_k}(t) \defeq \tau(t_n)$ and $\alpha_{\dt_k}(t) \defeq \alpha_n$ for $t\in(t_{n-1},t_n]$, $n=1,2,\ldots,N$.
By multiplying by $t-t_{n-1}$, we obtain that for all $t\in[t_{n-1},t_n], n=1,2,\ldots,N$,
\begin{align*}
    &\int_{t_{n-1}}^{t} \left(S\hat{\sigma}'_{\dt_k} - \E(\bar{v}_{\dt_k}) - \bar{h}_{\dt_k}, 
    \bar{\sigma}_{\dt_k} - \tau_{\dt_k}\right)_\Hs ds\\
    +~& \frac{1}{2a}(\|\hat{\alpha}_{\dt_k}(t)\|_\Hs^2 
    - \|\hat{\alpha}_{\dt_k}(t_{n-1})\|_\Hs^2)
    - \int_{t_{n-1}}^{t} (\bar{\xi}_{\dt_k}, \alpha_{\dt_k})_\Hs ds
    \le 0,
\end{align*}
and hence, it holds that for all $t\in[0,T]$,
\begin{align*}
    &\int_0^t \left(S\hat{\sigma}'_{\dt_k} - \E(\bar{v}_{\dt_k}) - \bar{h}_{\dt_k}, 
    \bar{\sigma}_{\dt_k} - \tau_{\dt_k}\right)_\Hs ds\\
    +~& \frac{1}{2a}(\|\hat{\alpha}_{\dt_k}(t)\|_\Hs^2 - \|\alpha_0\|_\Hs^2)
    - \int_0^t (\bar{\xi}_{\dt_k}, \alpha_{\dt_k})_\Hs ds
    \le 0.
\end{align*}
Since $\tau_\dt$, $\bar{h}_\dt$, and $\alpha_\dt$ converge to $\tau$, $h$, and $\alpha$, respectively, strongly in $L^2(0,T;\Hs)$, we have that for each fixed $t\in[0,T]$,
\[\begin{aligned}
    \int_0^t \left(S\hat{\sigma}'_{\dt_k} - \E(\bar{v}_{\dt_k}) - \bar{h}_{\dt_k}, \tau_{\dt_k}\right)_\Hs ds
    &\rightarrow \int_0^t \left(S\sigma' - \E(v) - h,\tau\right)_\Hs ds,\\
    \int_0^t \left(\bar{h}_{\dt_k}, 
    \bar{\sigma}_{\dt_k}\right)_\Hs ds
    &\rightarrow \int_0^t (h, \sigma)_\Hs ds,\\
    \int_0^t (\bar{\xi}_{\dt_k}, \alpha_{\dt_k})_\Hs ds
    &\rightarrow \int_0^t (\xi, \alpha)_\Hs ds,
\end{aligned}\] 
as $k \rightarrow \infty$. 
Furthermore, by Theorem \ref{Th:stab}, \cref{convshwc,convvhwc,convahwh}, we obtain that
\[\begin{aligned}
    &\liminf_{k\rightarrow\infty}
    \int_0^t \left(S\hat{\sigma}'_{\dt_k}, 
    \bar{\sigma}_{\dt_k}\right)_\Hs ds\\
    =~& \frac{1}{2}\liminf_{k\rightarrow\infty}
    \int_0^t \frac{d}{dt}\|\hat{\sigma}_{\dt_k}\|_S^2ds
    +\liminf_{k\rightarrow\infty}
    \int_0^t \left(S\hat{\sigma}'_{\dt_k}, 
    \bar{\sigma}_{\dt_k}-\hat{\sigma}_{\dt_k}\right)_\Hs ds\\
    =~& \frac{1}{2}\liminf_{k\rightarrow\infty}
    (\|\hat{\sigma}_{\dt_k}(t)\|_S^2 - \|\sigma_0\|_S^2)
    + \liminf_{k\rightarrow\infty}
    \int_0^t \left(S\hat{\sigma}'_{\dt_k}, 
    \bar{\sigma}_{\dt_k}-\hat{\sigma}_{\dt_k}\right)_\Hs ds\\
    \ge~& \frac{1}{2}\|\sigma(t)\|_S^2
    - \frac{1}{2}\|\sigma_0\|_S^2 + 0
    = \int_0^t \left(S\sigma', \sigma\right)_\Hs ds,
\end{aligned}\]
\[\begin{aligned}
    &\liminf_{k\rightarrow\infty}
    \left(-\int_0^t \left(\E(\bar{v}_{\dt_k}), 
    \bar{\sigma}_{\dt_k}\right)_\Hs ds\right)\\
    =~& \liminf_{k\rightarrow\infty}
    \left(-\int_0^t \left(\E(\bar{v}_{\dt_k}), 
    \bar{\sigma}^*_{\dt_k}\right)_\Hs ds
    + \int_0^t \left(\E(\bar{v}_{\dt_k}), 
    \bar{\sigma}^*_{\dt_k} - \bar{\sigma}_{\dt_k}\right)_\Hs ds\right)
    \\
    =~& \liminf_{k\rightarrow\infty}
    \int_0^t\left(\blaket{\hat{v}'_{\dt_k}}{\bar{v}_{\dt_k}}
    + \eta\|\E(\bar{v}_{\dt_k})\|_\Hs^2
    - \blaket{\bar{f}_{\dt_k}}{\bar{v}_{\dt_k}} \right)ds\\
    %
    %
    \ge~& \frac{1}{2}\|v(t)\|_\Hv^2 
    - \frac{1}{2}\|v(0)\|_\Hv^2
    + \int_0^t\left(\eta\|\E(v)\|_\Hs^2
    - \blaket{f}{v} \right)ds
    = -\int_0^t \left(\E(v), \sigma\right)_\Hs ds,\\
    & \liminf_{k\rightarrow\infty}\frac{1}{2a}(\|\hat{\alpha}_{\dt_k}(t)\|_\Hs^2 - \|\alpha_0\|_\Hs^2)\\
    \ge~& \frac{1}{2a}(\|\alpha(t)\|_\Hs^2 - \|\alpha_0\|_\Hs^2)
    = \frac{1}{a}\int_0^t \left(\alpha', \alpha\right)_\Hs ds
    = \int_0^t (\xi, \alpha)_\Hs ds,
\end{aligned}\]
where we have used the first and third equations of \eqref{sys:Prob}. 
Therefore, we obtain that for all $t\in[0,T]$ and $\tau \in C([0,T];\Hs)$ with $\tau(t) \in K(t;\alpha)$,
\begin{align*}
    \int_0^t\left(S\sigma' - \E(v) - h,\sigma - \tau\right)_\Hs ds
    \le~& \liminf_{k\rightarrow\infty} 
    \int_0^t \left(S\hat{\sigma}'_{\dt_k} - \E(\bar{v}_{\dt_k}) - \bar{h}_{\dt_k}, 
    \bar{\sigma}_{\dt_k} - \tau_{\dt_k}\right)_\Hs ds\\
    &+ \frac{1}{2a}(\|\hat{\alpha}_{\dt_k}(t)\|_\Hs^2 - \|\alpha_0\|_\Hs^2)
    - \int_0^t (\bar{\xi}_{\dt_k}, \alpha_{\dt_k})_\Hs ds
    \le 0,
\end{align*}
which implies the second inequality of \eqref{sys:Prob}.

(Uniqueness) Let $(v_1, \sigma_1, \alpha_1)$ and $(v_2, \sigma_2, \alpha_2)$ be two solutions to Problem \ref{Prob}. If we set $e_v \defeq v_1-v_2$ and $e_\sigma \defeq \sigma_1-\sigma_2$, by the first equation of \eqref{sys:Prob}, we have for all $\varphi \in\Vv$,
\[
    \blaket{e'_v}{\varphi}
    + \eta (\E(e_v),\E(\varphi))_\Hs + (e_\sigma,\E(\varphi))_\Hs
    = 0.
\]
Putting $\varphi\defeq e_v$ and integrating over time, we obtain that for all $t\in[0,T]$,
\begin{align}\label{ineq:uniq1}
    \frac{1}{2}\|e_v(t)\|_\Hv^2
    + \int_0^t\left(\eta \|\E(e_v)\|_\Hs^2 + (e_\sigma,\E(e_v))_\Hs\right)ds
    = 0,
\end{align}
where we have used $e_v(0) = v_1(0) - v_2(0) = 0$.

Since $(v_1, \sigma_1, \alpha_1)$ is the solution to Problem \ref{Prob}, by putting $\tau \defeq \sigma_2 - \alpha_2 + \alpha_1$ in \eqref{sys:Prob}, we obtain that 
\begin{align}\label{ineq:uniq2}\begin{aligned}
    0 &\ge \left(S\sigma'_1 - \E(v_1) - h,\sigma_1 - \sigma_2 - \alpha_1 + \alpha_2\right)_\Hs
    = \left(S\sigma'_1 - \E(v_1) - h,\sigma_1 - \sigma_2\right)_\Hs
    + \left(\xi_1, \alpha_1 - \alpha_2\right)_\Hs\\
    &= \left(S\sigma'_1 - \E(v_1) - h,e_\sigma\right)_\Hs
    + \frac{1}{a}\left(\alpha'_1, \alpha_1 - \alpha_2\right)_\Hs,
\end{aligned}\end{align}
where $\xi_1 = \E(v_1) + h - S\sigma'_1$.
On the other hand, since $(v_2, \sigma_2, \alpha_2)$ is the solution to Problem \ref{Prob}, by putting $\tau \defeq \sigma_1 - \alpha_1 + \alpha_2$ in \eqref{sys:Prob}, we obtain 
\begin{align}\label{ineq:uniq3}\begin{aligned}
    0 &\ge \left(S\sigma'_2 - \E(v_2) - h,\sigma_2 - \sigma_1 + \alpha_1 - \alpha_2\right)_\Hs
    = \left(S\sigma'_2 - \E(v_2) - h,\sigma_2 - \sigma_1\right)_\Hs
    + \left(\xi_2, \alpha_2 - \alpha_1\right)_\Hs\\
    &= -\left(S\sigma'_2 - \E(v_2) - h,e_\sigma\right)_\Hs
    - \frac{1}{a}\left(\alpha'_2, \alpha_1 - \alpha_2\right)_\Hs,
\end{aligned}\end{align}
where $\xi_2 = \E(v_2) + h - S\sigma'_2$.
Adding \eqref{ineq:uniq2} and \eqref{ineq:uniq3} together, if we set $e_\alpha\defeq \alpha_1 - \alpha_2$ we get
\begin{align*}
    \left(Se'_\sigma-\E(e_v),e_\sigma\right)_\Hs
    + \frac{1}{a}\left(e'_\alpha, e_\alpha\right)_\Hs \le 0.
\end{align*}
Integrating in time, we obtain that for all $t\in[0,T]$,
\begin{align}\label{ineq:uniq4}
    \frac{1}{2}\|e_\sigma(t)\|_S^2 + \frac{1}{2a}\|e_\alpha(t)\|_\Hs^2
    - \int_0^t\left(\E(e_v),e_\sigma\right)_\Hs ds \le 0,
\end{align}
where we have used $e_\sigma(0) = \sigma_1(0) - \sigma_2(0) = 0$ and $e_\alpha(0) = \alpha_1(0) - \alpha_2(0) = 0$. Thus, summing up \eqref{ineq:uniq1} and \eqref{ineq:uniq4}, it holds that for all $t\in[0,T]$, 
\[
    \frac{1}{2}\|e_v(t)\|_\Hv^2 
    + \frac{1}{2}\|e_\sigma(t)\|_S^2
    + \frac{1}{2a}\|e_\alpha(t)\|_\Hs^2
    + \eta\int_0^t\|\E(e_v)\|_\Hs^2ds \le 0,
\]
and hence, $v_1-v_2 = e_v = 0$, $\sigma_1-\sigma_2 = e_\sigma = 0$, and $\alpha_1 - \alpha_2 = e_\alpha = 0$ on $[0,T]$.

\section{Conclusion}

We studied Problem \ref{Prob}, which describes a dynamical elasto-plasticity model with Kelvin--Voigt viscosity and linear kinematic hardening, where the yield surface depends on time and on the plastic strain $\varepsilon_p=\E(u)-S\sigma$ through the translated constraint set.
To construct solutions, we introduced the projection-based time discretization (P), in which one first solves a linear viscous--elastic subproblem to obtain a trial stress and then projects it onto the translated admissible set.
We proved that the discrete solutions produced by (P) satisfy stability estimates under suitable norms. On the basis of these uniform bounds, we passed to the limit as the time step tends to zero and obtained the existence of a weak solution to Problem \ref{Prob}; moreover, uniqueness follows from an energy argument.

Several directions remain for future work. First, although the present analysis is carried out in spatially continuous function spaces, the structure of (P) suggests a natural path to a fully discrete method: by replacing the function spaces with suitable finite element spaces, one can expect an implementable FEM-type algorithm, and it will be important to establish convergence of such fully discrete approximations and to perform numerical tests.
Second, it is of interest to investigate how far the projection-based strategy can be generalized, for instance to other yield criteria such as Tresca-type constraints, to more general convex admissible sets, or to extensions where the hardening parameter $a$ is replaced by a matrix-valued operator.
Finally, from the analytical viewpoint, it is natural to ask whether one can treat regimes beyond the present regularized setting. The case $\eta=0$ is particularly challenging since the viscous term no longer yields the spatial $H^1$-control of the velocity and one cannot handle $\E(v)$ within the same compactness framework. In addition, allowing $a<0$ corresponds to a softening-type effect \cite{Gudoshnikov25} and breaks the coercivity/monotonicity structure used in the stability and uniqueness arguments, so the well-posedness theory would require substantially different techniques.


\bibliographystyle{spmpsci}      
\bibliography{Proj4Plasticity.bib}

\end{document}